\documentclass{amsart}
\vfuzz2pt 

\usepackage[colorlinks]{hyperref}
\renewcommand\eqref[1]{(\ref{#1})} 

 \newtheorem{thm}{THEOREM}[section]
 \newtheorem{cor}[thm]{Corollary}
 
 \newtheorem{prop}[thm]{Proposition}
 \newtheorem{claim}[thm]{Claim}
 \newtheorem{rem}[thm]{Remark}
 \newtheorem{example}[thm]{Example}
 

%

 \newcommand{\set}[1]{\left\{#1\right\}}

\newcommand \be     {\begin{equation}}
\newcommand \ee     {\end{equation}}
\newcommand {\RR} {\mathbb{R}}
\newcommand {\CC} {\mathbb{C}}
\newcommand\R{{\mathbb R}}
\newcommand \Rn    {\mathbb{R}^n}
\newcommand \eps   {\epsilon}

\newcommand \Ncal   {\mathcal N}
\newcommand \Hcal   {\mathcal H}
\newcommand \Dcal   {\mathcal D}

 \def\br#1{{\left[{#1}\right]}}
 \def\n#1{{\left\|{#1}\right\|}}
 \def\jp#1{{\left\langle{#1}\right\rangle}}
 \def\p#1{{\left({#1}\right)}}
 \def\abs#1{{\left|{#1}\right|}}
\def\Im{\operatorname{Im}}

\def\supp{\operatorname{supp}}

\newcommand{\MX}{\mathcal{X}}

  \newtheorem*{newprop2.2}{PROPOSITION 2.2}
  \newtheorem*{newprop2.3}{PROPOSITION 2.3}
  \newtheorem{assume}[thm]{ASSUMPTION}
 \numberwithin{equation}{section}
\begin{document}

\title [SPECTRAL IDENTITIES AND SMOOTHING ESTIMATES ]
{SPECTRAL IDENTITIES AND SMOOTHING ESTIMATES FOR EVOLUTION OPERATORS}
\author{ Matania Ben-Artzi }
\address{Institute of Mathematics, Hebrew University, Jerusalem
91904, Israel}
\email{mbartzi@math.huji.ac.il}
\author{ Michael Ruzhansky }
\address{Department of Mathematics, Imperial College, London SW72AZ,UK}
\email{m.ruzhansky@imperial.ac.uk}
\author{ Mitsuru Sugimoto }
\address{Graduate School of Mathematics, Nagoya University, Nagoya 464-8602, Japan}
\email{sugimoto@math.nagoya-u.ac.jp}
\thanks{The second
 author was supported in parts by the  EPSRC grant EP/R003025/1 and by the Leverhulme
Grant RPG-2017-151.
 The third author is partially supported by Grant-in-aid for Scientific Research from JSPS (No.~26287022 and No.~26610021).}
\subjclass[2010]{Primary 35G10; Secondary 35S30, 47A10}

\keywords{global spacetime estimates,  smoothing, decay, pseudodifferential operators, comparison principle, perturbations, spectral derivative, best constant}

\date{\today}
\begin{abstract} Smoothing (and decay) spacetime estimates are discussed for evolution groups of self-adjoint operators in an abstract setting. The basic assumption is the existence (and weak continuity) of the spectral density in a functional setting.
 Spectral identities for the time evolution of such operators
are derived, enabling results concerning ``best constants'' for smoothing estimates.  When combined with suitable
``comparison principles'' (analogous to those established in ~\cite{RS2012}), they yield
 smoothing estimates for classes of functions of the
operators .

A important particular case is the derivation of global spacetime estimates for a perturbed operator $H+V$ on the basis of its comparison with the unperturbed operator $H.$

A number of applications are given, including smoothing estimates for fractional Laplacians, Stark Hamiltonians and
Schr\"odinger operators with potentials.

\end{abstract}

\maketitle

\tableofcontents\newpage
\section{Introduction}
In this paper we present an abstract framework for global spacetime and smoothing estimates for evolution groups generated by self-adjoint operators. In particular, this approach leads to such estimates for various classes of pseudodifferential operators.

     Global spacetime estimates for solutions of partial differential evolution equations (such as the wave equation or the Schr\"{o}dinger equation) have already become a fundamental tool in the investigation of such equations, subject to linear and nonlinear perturbations. It suffices to mention in this context the Strichartz estimates ~\cite{strichartz,tao} or weighted-$L^2$ estimates ~\cite{ba9}. These global estimates present a twofold aspect,   expressing both an additional regularity of the solutions  and  their decay properties at large time. Additionally, the local ``smoothing effects'' (such as the ``$\frac12-$derivative gain'' in the case of the Schr\"{o}dinger equation) have been incorporated into the estimates ~\cite{ConstSaut,kenig,vega}.

 The class of equations for which such estimates have been established has been substantially extended  in the last twenty years, as can be seen in ~\cite{ba6,ba7,ba8,ba10,ba5,ba11,ba12,ancona,doi,hoshiro1,hoshiro2,ky} and references therein.

     The equivalence of many different estimates has been observed in ~\cite{RS2012}, by an application of canonical transforms and comparison principles. 
     At the same time, it can be also used to obtain critical versions of the limiting absorption principle and smoothing estimates \cite{RS2012cmp}.
     
     In this paper we advance further this approach, in a unifying abstract setting. In particular, various functions (in the functional-analytic sense) of an operator $H$ admit such estimates, based solely on their validity for $H.$ The treatment here enables sharp spacetime and smoothing estimates for evolution groups of pseudodifferential operators, with initial data that may be localized in ``energy space''. We do not need to exclude the possibility of singular spectrum embedded in the absolutely continuous part of the spectrum.

     As in the case of many useful differential inequalities (such as Hardy's inequality), it is of interest to establish the ``best (or optimal) constant'' . This has been done in the case of the free Schr\"{o}dinger operator in ~\cite{simon} and further extended in ~\cite{BezSS1,BezSS2}.  In the abstract framework used here, we are able to provide such a best constant in a wide array of spacetime estimates.

     The paper is organized as follows.

     In Section ~\ref{secbasicsetup} the basic abstract setup and notation are introduced.

     Section ~\ref{secglobalevolution} presents the main global estimates in the abstract formulation, as well as the ``best constant'' result
     of Theorem ~\ref{thmbestconst}.

     Section ~\ref{sec-comparison} deals with comparison principles. Generally speaking, if the evolution of a self-adjoint operator $H$ can be globally (namely, in space-time) estimated, then the same is true for another self-adjoint operator $\widetilde H,$ provided the spectral derivatives of the two operators can be ``compared'' (Theorem ~\ref{THM:sp-comparison-gen-op}). As a special case, we obtain a ``comparison'' between a self-adjoint operator and its potential perturbation (Theorem ~\ref{THM:sp-comparison-gen-op-pert}).

     Finally, in Section ~\ref{sec-examples} we give a few examples, such as fractional Laplacians and Schr\"{o}dinger operators. We did not try to present the ``most complicated'' cases, but rather convey the general ideas as applied to well-known operators of mathematical physics.

     \section{The basic setup and notation}\label{secbasicsetup}

     \textbf{Notation.} The following notations are used throughout the paper.
     \begin{itemize}
     \item   $\jp{x}=(1+|x|^2)^\frac12.$
     \item The Fourier transform in $\Rn:$
        $$\mathcal{F}f(\xi)=\widehat{f}(\xi)=(2\pi)^{-\frac{n}{2}}\int_{\Rn}f(x)e^{-i\xi x}\,dx.$$
        \item It is useful in multi-variable formulas to indicate by an index  a variable in integration:

        $$L^2(J_\lambda)=\set{v(\lambda)\,;\,\,\int_J|v(\lambda)|^2\,d\lambda<\infty}.$$
        \item Let $I\subseteq\RR$ be an interval and let $Y$ be a Hilbert space. Then
          $$L^2(I,Y)=\set{g:I\to Y\,;\,\,  \|g\|^2_{L^2(I,Y)}=\int_I\|g(t)\|^2_Y\,dt<\infty}.$$
        \item The space of bounded linear operators on a space $X$ into a space $Y$ is designated as $B(X,Y)$ with (uniform) operator norm $\|\cdot\|_{B(X,Y)}.$
     \end{itemize}

    Let $H$ be a self-adjoint operator in a Hilbert space $\Hcal.$ The scalar product and norm in $\Hcal$ are denoted respectively by $(\cdot,\cdot)_{\Hcal}$ and $\|\cdot\|_{\Hcal}.$

    We denote by $\set{E(I),\,\,I\subseteq\RR \,\,\mbox{an open interval}}$ the associated  spectral family of $H.$  We use the common simplified notation
     $E(\lambda)=E(-\infty,\lambda).$

    By $P_{ac}(H)$ we denote the projection on the absolutely continuous subspace.

    Let $\Sigma_{ac}\subseteq\RR$ be the absolutely continuous spectrum of $H.$ Note that we are not assuming the absence of singular spectrum embedded in the absolutely continuous spectrum. Thus, we allow $P_{ac}(H)\neq E(\Sigma_{ac}).$

    Let $\MX\subseteq\Hcal$ be an embedded dense subspace with a stronger norm $\|\cdot\|_{\MX}.$ If $\MX^\ast$ is the dual space, we have the canonical inclusion $\MX\subseteq\Hcal\subseteq\MX^\ast.$ We denote by $\jp{\cdot,\cdot}$  the $\MX^\ast,\MX$ pairing.

   Let $J\subseteq\RR$ be an open set.  The \textbf{basic tool in our treatment}  is the hypothesis that the weak spectral derivative
    $$A(\lambda)=\frac{d}{d\lambda}\Big(E(\lambda)P_{ac}(H)\Big),\quad \lambda\in J,$$
   exists and
        is bounded from  $\MX$ into $\MX^\ast.$ Thus
       \be\label{eqAXX*}\jp{A(\lambda)f,g}=\frac{d}{d\lambda}\Big(E(\lambda)P_{ac}(H)f,g\Big)_{\Hcal},\quad f,g\in \MX,\,\,\lambda\in J.\ee
            Since $<A(\lambda)f,g>$ is a nonnegative bilinear form we have
            $$|\jp{A(\lambda)f,g}|\leq \,\, \jp{A(\lambda)f,f}^{\frac12}\cdot \jp{A(\lambda)g,g}^{\frac12},$$
            hence
            \be\label{eqnormAXXast}\|A(\lambda)\|_{B(\MX,\MX^\ast)}=\sup\limits_{\|f\|_{\MX}=1}\jp{A(\lambda)f,f}^{\frac12}.\ee
       \begin{rem}\label{remEalambda} Note that for every $a>-\infty$ we clearly have
       \be\label{eqAXX*a}\jp{A(\lambda)f,g}=\frac{d}{d\lambda}\Big(E((a,\lambda))P_{ac}(H)f,g\Big)_{\Hcal},\quad f,g\in \MX,\,\,a<\lambda\in J.\ee
             \end{rem}
             \begin{rem}\label{rem-XXast} When dealing with perturbations (Subsection ~\ref{subsec-pert}) we shall need to use a more general setting.
             \end{rem}

       We shall find it simpler (and clearer) to formulate our results in the abstract setting. A typical concrete example is $\Hcal=L^2(\Rn)$ and $\MX=L^2_s(\Rn_x),\,s>0,$ the weighted $L^2$ space, which is the usual choice in many applications. This space is defined by:
       \be L^2_s(\Rn_x)=\set{f(x)\,;\,\,\|f\|_{0,s}^2=\int\limits_{\Rn}\jp{x}^{2s}|f(x)|^2\,dx<\infty,\,\,s\in\RR}.
       \ee

       In this case we denote the $\MX^\ast,\MX$ pairing simply as
       $(\cdot,\cdot)_{L^2_{-s},L^2_s},$ and the corresponding norm of the spectral derivative by $\|A(\lambda)\|_{s,-s}.$

\section{Global smoothing estimates}\label{secglobalevolution}
The following assumption is fundamental in what follows.

 \begin{assume}\label{assumption} Let $J\subseteq\RR$ be an open set .  The operator-valued function $A(\lambda):\MX\hookrightarrow \MX^\ast,$ is weakly continuous on $J.$

 In particular, it is also locally bounded on $J$ (in the uniform operator topology).
 \end{assume}

The class of  operators studied here involves
  two real-valued functions:
  \begin{assume}\label{assumptionasigma}
 $$\sigma(\lambda)\,\,\,\, \mbox{is continuous on}\,\,J.$$
 $$\aligned\mbox{ There exists a finite set}\,\, \Ncal\subseteq J \,\,\mbox{so that}\,\, a(\lambda)\in C^1(J\setminus\Ncal),\,\, \,\,\mbox{and}\\
 a'(\lambda)> 0,\,\,\lambda\in J\setminus\Ncal.\hspace{100pt}
 \endaligned $$
\end{assume}
 We first establish  useful identities for the unitary group $e^{ita(H)}.$ They are essentially an expression of the duality of ``time'' and the ``spectral parameter'' (or ``energy'').
 \begin{prop}\label{propidentity} Assume the conditions of Assumptions ~\ref{assumption}-~\ref{assumptionasigma}.

Let $\phi,\psi\in\MX.$ Then
 \begin{enumerate}
 \item \be\label{eq1identity}\aligned \|(\sigma(H)e^{ita(H)}P_{ac}(H)E(J)\phi,\psi)_{\Hcal}\|_{L^2(\RR_t)}\hspace{100pt} \\=
     \sqrt{2\pi}\,\Big\|\frac{\sigma(\lambda)}{a'(\lambda)^\frac12}
 \jp{A(\lambda)\phi,\psi}\Big\|_{L^2(J_\lambda)},
 \endaligned\ee
 and
 \item
  \begin{equation}\label{eq2identity}
 \aligned \Big\|\sigma(H)e^{ita(H)}P_{ac}(H)E(J)\phi\Big\|_{ L^2(\RR_t,\MX^\ast)}\hspace{100pt}\\=
 \sqrt{2\pi}\,\Big\|\frac{\sigma(\lambda)}{a'(\lambda)^\frac12}
 (A(\lambda)\phi)\Big\|_{ L^2(J_\lambda,\MX^\ast)}.
 \endaligned\end{equation}
 \end{enumerate}
 \end{prop}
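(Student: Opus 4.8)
The plan is to use the spectral theorem to realize the unitary group $e^{ita(H)}$ as multiplication on a spectral representation, and then recognize the resulting $t$-integral as a Fourier transform, so that Plancherel's theorem produces the $L^2(J_\lambda)$ norm on the right-hand side. Concretely, for fixed $\phi,\psi\in\MX$, introduce the scalar spectral measure $\mu_{\phi,\psi}(d\lambda) = \bigl(E(d\lambda)P_{ac}(H)E(J)\phi,\psi\bigr)_{\Hcal}$, which by Assumption~\ref{assumption} and \eqref{eqAXX*a} is absolutely continuous on $J$ with density $\jp{A(\lambda)\phi,\psi}$ (and is supported in $J$ because of the factor $E(J)$). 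Then
\[
\bigl(\sigma(H)e^{ita(H)}P_{ac}(H)E(J)\phi,\psi\bigr)_{\Hcal} = \int_J e^{ita(\lambda)}\sigma(\lambda)\,\jp{A(\lambda)\phi,\psi}\,d\lambda.
\]
The first step is to justify this identity and to check that the integrand is in $L^1(J_\lambda)$, at least after the change of variables below, so that the right-hand side is a genuine (inverse) Fourier transform.

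The second step is the change of variables $\mu = a(\lambda)$. Since $a\in C^1(J\setminus\Ncal)$ with $a'>0$ there, on each connected component of $J\setminus\Ncal$ the map $\lambda\mapsto a(\lambda)$ is a $C^1$ diffeomorphism onto an open interval, and $\Ncal$ being finite has measure zero, so it contributes nothing to the integrals. Writing $\lambda = a^{-1}(\mu)$ and $d\lambda = d\mu/a'(\lambda)$, the integral becomes
\[
\int e^{it\mu}\,\frac{\sigma(\lambda(\mu))}{a'(\lambda(\mu))}\,\jp{A(\lambda(\mu))\phi,\psi}\,d\mu = \sqrt{2\pi}\,\bigl(\mathcal{F}^{-1} G\bigr)(t),
\]
where $G(\mu) = \dfrac{\sigma(\lambda(\mu))}{a'(\lambda(\mu))}\jp{A(\lambda(\mu))\phi,\psi}$ (extended by zero off $a(J\setminus\Ncal)$). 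Applying the Plancherel identity $\|\mathcal{F}^{-1}G\|_{L^2(\RR_t)} = \|G\|_{L^2(\RR_\mu)}$ and then changing variables back via $d\mu = a'(\lambda)\,d\lambda$ gives
\[
\|G\|_{L^2(\RR_\mu)}^2 = \int_J \frac{\sigma(\lambda)^2}{a'(\lambda)^2}\,\bigl|\jp{A(\lambda)\phi,\psi}\bigr|^2\,a'(\lambda)\,d\lambda = \Bigl\|\frac{\sigma(\lambda)}{a'(\lambda)^{1/2}}\jp{A(\lambda)\phi,\psi}\Bigr\|_{L^2(J_\lambda)}^2,
\]
which is precisely \eqref{eq1identity}. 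For part (2), I would run the same argument but treat the pairing vector-valued: apply the scalar identity to $\psi$ replaced by each test element and take suprema, or more cleanly observe that $\mu\mapsto A(\lambda(\mu))\phi \in \MX^\ast$ is, by Assumption~\ref{assumption}, weakly measurable into a space where the needed Plancherel/Fubini manipulation is legitimate, and conclude $\|e^{ita(H)}\cdots\phi\|_{L^2(\RR_t,\MX^\ast)} = \sqrt{2\pi}\,\|a'(\lambda)^{-1/2}\sigma(\lambda)A(\lambda)\phi\|_{L^2(J_\lambda,\MX^\ast)}$ by reducing to the scalar case through the definition of the $\MX^\ast$ norm as a supremum of pairings over the unit ball of $\MX$ (together with $\eqref{eqnormAXXast}$-type reasoning and separability, or a direct vector-valued Plancherel theorem).

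The main obstacle I anticipate is integrability/finiteness: the identities are stated as equalities in $[0,\infty]$, so one must argue that if either side is finite the manipulations (Fubini, the change of variables near the exceptional points $\Ncal$, and the passage from weak to strong measurability in the vector-valued case) are all justified, and otherwise both sides are simultaneously infinite. In particular one should be careful that the spectral integral defining $\sigma(H)e^{ita(H)}P_{ac}(H)E(J)\phi$ need not converge in $\Hcal$ for each $t$ unless $\sigma(H)E(J)\phi\in\Hcal$; the cleanest route is to first establish everything for $\phi$ with $E(J)\phi$ in the domain of $\sigma(H)$ (or to truncate $J$ to a relatively compact subset $J'$ where $\sigma$ and $a'$ are bounded and bounded below, prove the identity there, and pass to the limit $J'\uparrow J$ by monotone convergence on both sides). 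The genuinely new analytic content beyond Plancherel is thus the careful bookkeeping that the finite set $\Ncal$ and the possible unboundedness of $\sigma$ or degeneracy of $a'$ near $\partial J$ do not break the equality.
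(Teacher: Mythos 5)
Your argument for identity \eqref{eq1identity} is essentially the paper's: represent the pairing as $\int_J e^{ita(\lambda)}\sigma(\lambda)\jp{A(\lambda)\phi,\psi}\,d\lambda$, change variables $\eta=a(\lambda)$ (the finite exceptional set $\Ncal$ being harmless), and apply the scalar Plancherel theorem; your extra care about truncation near $\partial J$ and the interpretation as an equality in $[0,\infty]$ is reasonable bookkeeping that the paper passes over silently.

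For identity \eqref{eq2identity}, however, the route you actually spell out has a genuine gap. Your first suggestion --- ``apply the scalar identity with $\psi$ replaced by each test element and take suprema'' --- does not yield the claim: for each fixed unit vector $\psi\in\MX$ the scalar identity equates $\int_{\RR}|\jp{u(t),\psi}|^2\,dt$ with a weighted $\lambda$-integral of $|\jp{A(\lambda)\phi,\psi}|^2$, but taking the supremum over $\psi$ of these time-integrated quantities is not the same as integrating in $t$ the pointwise supremum $\|u(t)\|_{\MX^\ast}^2$ (sup of an integral is in general strictly smaller than the integral of the sup), and the same mismatch occurs on the $\lambda$-side; so equality of the suprema does not give equality of the $L^2(\RR_t,\MX^\ast)$ and $L^2(J_\lambda,\MX^\ast)$ norms. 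Your fallback, a ``direct vector-valued Plancherel theorem'', is only available when $\MX^\ast$ is (isomorphic to) a Hilbert space; this holds in the model case $\MX=L^2_s$, but it is not part of the abstract hypotheses, so as stated it does not prove the proposition in the generality claimed. The paper closes exactly this gap by dualizing in the \emph{time} variable rather than pointwise: it writes $\|u\|_{L^2(\RR_t,\MX^\ast)}=\sup_{\|g\|_{L^2(\RR_t,\MX)}=1}\big|\int_{\RR}(u(t),g(t))_{\Hcal}\,dt\big|$, uses the spectral representation and the scalar Plancherel theorem applied to $t\mapsto\jp{A(\lambda)\phi,g(t)}$ to replace $g$ by its Fourier transform $\widetilde g$ evaluated at $a(\lambda)$, changes variables $\eta=a(\lambda)$, and then invokes the duality between $L^2(\widetilde J,\MX)$ and $L^2(\widetilde J,\MX^\ast)$ to identify the supremum with $\sqrt{2\pi}\,\|\sigma\,a'^{-1/2}A(\cdot)\phi\|_{L^2(J_\lambda,\MX^\ast)}$. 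If you replace your ``take suprema over $\psi$'' step by this duality against $g\in L^2(\RR_t,\MX)$, your proof matches the paper's; otherwise part (2) remains unproved in the stated abstract setting.
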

 \begin{proof} For simplicity we take $\Ncal=\emptyset$ in   Assumption ~\ref{assumptionasigma}. Otherwise, the set $J$ should be replaced by $J\setminus\Ncal.$

  To prove ~\eqref{eq1identity} we use the definition of $A(\lambda)$ and the spectral calculus to get
 \be
 (\sigma(H)e^{ita(H)}P_{ac}(H)E(J)\phi,\psi)_{\Hcal}=\int\limits_{J}e^{ita(\lambda)}\sigma(\lambda)\jp{A(\lambda)\phi,\psi}d\lambda.
 \ee
 Defining a new variable $\eta=a(\lambda)$ yields, with $\widetilde{J}=\set{\eta=a(\lambda)\,;\,\lambda\in J},$ 
 $$
 (\sigma(H)e^{ita(H)}P_{ac}(H)E(J)\phi,\psi)_{\Hcal}=\int\limits_{\widetilde{J}}e^{it\eta}\sigma(a^{-1}(\eta))
 \jp{A(a^{-1}(\eta))\phi,\psi}\frac{d\eta}{a'(a^{-1}(\eta))}.
 $$
  Invoking Plancherel's theorem to this equality yields
 \be\label{eqtimelambda}\aligned
 \|(\sigma(H)e^{ita(H)}P_{ac}(H)E(J)\phi,\psi)_{\Hcal}\|_{L^2(\RR_t)}^2\hspace{150pt}\\ \\=
 2\pi\int\limits_{\widetilde{J}}\frac{|\sigma(a^{-1}(\eta))\jp{A(a^{-1}(\eta))\phi,\psi}|^2}{a'(a^{-1}(\eta))}
 \frac{d\eta}{a'(a^{-1}(\eta))}=2\pi\int\limits_{J}\frac{\sigma(\lambda)^2}{a'(\lambda)}
| \jp{A(\lambda)\phi,\psi}|^2\,d\lambda.
 \endaligned\ee
 This concludes the proof of ~\eqref{eq1identity}.

 To prove ~\eqref{eq2identity} we apply again the spectral decomposition of $H$ and the definition of $A(\lambda),$
 \be\label{eq2identitya}\aligned
 \Big\|\sigma(H)e^{ita(H)}P_{ac}(H)E(J)\phi\Big\|_{ L^2(\RR_t,\MX^\ast)}\hspace{100pt}\\
 =\sup\limits_{\|g\|_{ L^2(\RR_t,\MX)}=1}\Big|\int_\RR\Big(\sigma(H)e^{ita(H)}P_{ac}(H)E(J)\phi,
 g(t)\Big)_{\Hcal}dt\Big|\\
 =\sup\limits_{\|g\|_{ L^2(\RR_t,\MX)}=1}\Big|\int_\RR\int_{J_\lambda}\sigma(\lambda)
 e^{ita(\lambda)}\jp{A(\lambda)\phi,
 g(t)}\,d\lambda dt\Big|\\
 =\sup\limits_{\|g\|_{ L^2(\RR_t,\MX)}=1} \sqrt{2\pi}\,\Big|\int_{J_\lambda}\sigma(\lambda)
 \jp{A(\lambda)\phi,
 \widetilde{g}(a(\lambda))}\,d\lambda \Big|,
 \endaligned\ee
  where the one-dimensional (with respect to $t$) Fourier transform is
  \be\label{eqwideg}\widetilde{g}(\theta)=\frac{1}{\sqrt{2\pi}}\int_\RR e^{-it\theta}g(t)\,dt.\ee
  As above we introduce the new variable $\eta=a(\lambda)$ and use the Plancherel theorem to obtain,
  \be\label{eq2identityb1}\aligned
 \Big\|\sigma(H)e^{ita(H)}P_{ac}(H)E(J)\phi\Big\|_{ L^2(\RR_t,\MX^\ast)}\hspace{100pt}\\
 =\sup\limits_{\|\widetilde{g}\|_{ L^2(\RR_\eta,\MX)}=1} \sqrt{2\pi}\,\Big|\int_{\widetilde{J}}\frac{\sigma(a^{-1}(\eta))}{a'(a^{-1}(\eta))}
 \jp{A(a^{-1}(\eta))\phi,
 \widetilde{g}(\eta)}\,d\eta,
 \endaligned\ee
 hence
  $$
  \Big\|\sigma(H)e^{ita(H)}P_{ac}(H)E(J)\phi\Big\|_{\MX^\ast\bigotimes L^2(\RR_t)}^2=2\pi\Big|\int_{\widetilde{J}}\Big(\frac{\sigma(a^{-1}(\eta))}{a'(a^{-1}(\eta))}\Big)^2
 \|A(a^{-1}(\eta))\phi\|_{\MX^\ast}^2
 \,d\eta \Big|,
  $$
    from which
  ~\eqref{eq2identity} readily follows.
 \end{proof}
 The method of proof used for Proposition ~\ref{propidentity} in conjunction with the role of $A(\lambda)$ in the spectral calculus enable us to obtain a general spacetime estimate for any initial data $\phi\in\Hcal.$
 \begin{thm}\label{thmbasicspacetime}
 Assume the  conditions of Assumptions ~\ref{assumption}-~\ref{assumptionasigma} and let $\phi\in\Hcal.$ Then
 \begin{multline}\label{eqbasicspacetime}
  \Big\|\sigma(H)e^{ita(H)}P_{ac}(H)E(J)\phi\Big\|_{ L^2(\RR_t,\MX^\ast)}
\hspace{200pt}\\ \leq
 \sqrt{2\pi}\sup\limits_{\lambda\in {J\setminus\Ncal}}\Big\{\frac{|\sigma(\lambda)|}{a'(\lambda)^\frac12}
 \|A(\lambda)\|^\frac12_{B(\MX,\MX^\ast)}\Big\}\|P_{ac}(H)E(J)\phi\|_{\Hcal}.
\end{multline} 
 \end{thm}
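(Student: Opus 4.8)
The plan is to deduce the estimate directly from the spectral identity \eqref{eq2identity} of Proposition \ref{propidentity}, first for $\phi \in \MX$ and then pass to general $\phi \in \Hcal$ by density and a uniform bound. As before, I would assume $\Ncal = \emptyset$ without loss of generality (replacing $J$ by $J \setminus \Ncal$, which removes only a finite set and hence changes no $L^2$-integral in $\lambda$). Starting from \eqref{eq2identity}, I would estimate the right-hand side pointwise in $\lambda$: since $\|A(\lambda)\phi\|_{\MX^\ast} \le \|A(\lambda)\|_{B(\MX,\MX^\ast)} \|\phi\|_{\MX}$ is too lossy, the key is instead to use the factorization $\|A(\lambda)\phi\|_{\MX^\ast} \le \|A(\lambda)\|^{1/2}_{B(\MX,\MX^\ast)} \jp{A(\lambda)\phi,\phi}^{1/2}$, which follows from \eqref{eqnormAXXast} together with the Cauchy–Schwarz-type bound $|\jp{A(\lambda)\phi,g}| \le \jp{A(\lambda)\phi,\phi}^{1/2}\jp{A(\lambda)g,g}^{1/2}$ applied cleverly; more precisely, $\jp{A(\lambda)\phi,g} = (A(\lambda)^{1/2}\phi, A(\lambda)^{1/2}g)$ in a suitable sense, so $\|A(\lambda)\phi\|_{\MX^\ast} = \sup_{\|g\|_{\MX}=1}|\jp{A(\lambda)\phi,g}| \le \jp{A(\lambda)\phi,\phi}^{1/2}\sup_{\|g\|_{\MX}=1}\jp{A(\lambda)g,g}^{1/2} = \jp{A(\lambda)\phi,\phi}^{1/2}\|A(\lambda)\|^{1/2}_{B(\MX,\MX^\ast)}$.

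With this bound in hand, I would write
\[
\Big\|\frac{\sigma(\lambda)}{a'(\lambda)^{1/2}}(A(\lambda)\phi)\Big\|^2_{L^2(J_\lambda,\MX^\ast)}
= \int_J \frac{\sigma(\lambda)^2}{a'(\lambda)}\|A(\lambda)\phi\|_{\MX^\ast}^2\, d\lambda
\le \int_J \frac{\sigma(\lambda)^2}{a'(\lambda)}\|A(\lambda)\|_{B(\MX,\MX^\ast)}\jp{A(\lambda)\phi,\phi}\, d\lambda,
\]
and then pull out the supremum of $\frac{\sigma(\lambda)^2}{a'(\lambda)}\|A(\lambda)\|_{B(\MX,\MX^\ast)}$ over $\lambda \in J \setminus \Ncal$, leaving $\int_J \jp{A(\lambda)\phi,\phi}\, d\lambda = (E(J)P_{ac}(H)\phi,\phi)_{\Hcal} = \|P_{ac}(H)E(J)\phi\|^2_{\Hcal}$ by \eqref{eqAXX*} and the fact that $E(J)$ and $P_{ac}(H)$ are commuting orthogonal projections. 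Combining with \eqref{eq2identity} and taking square roots gives \eqref{eqbasicspacetime} for $\phi \in \MX$.

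Finally I would remove the restriction $\phi \in \MX$: the operator $\phi \mapsto \sigma(H)e^{ita(H)}P_{ac}(H)E(J)\phi$ has just been shown to be bounded from $\MX$ (with the $\Hcal$-norm, in fact with the $\|P_{ac}(H)E(J)\cdot\|_{\Hcal}$ seminorm) into $L^2(\RR_t,\MX^\ast)$ with operator norm at most the constant on the right-hand side of \eqref{eqbasicspacetime}; since $\MX$ is dense in $\Hcal$ and the target norm only involves $\|P_{ac}(H)E(J)\phi\|_{\Hcal} \le \|\phi\|_{\Hcal}$, a standard density/continuity argument extends the inequality to all $\phi \in \Hcal$. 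The main obstacle is the correct exploitation of the nonnegativity of the form $\jp{A(\lambda)\cdot,\cdot}$ to obtain the ``square-root'' bound $\|A(\lambda)\phi\|_{\MX^\ast} \le \|A(\lambda)\|^{1/2}_{B(\MX,\MX^\ast)}\jp{A(\lambda)\phi,\phi}^{1/2}$, since it is precisely this that converts the crude estimate (which would involve $\|\phi\|_{\MX}$) into one involving only $\|\phi\|_{\Hcal}$ after integration; everything else is a change of variables and Plancherel already carried out in Proposition \ref{propidentity}, plus routine density.
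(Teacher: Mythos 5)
Your proof is correct, and it reaches the stated bound with the same constant, but it is organized differently from the paper's argument. The paper does not pass through the identity \eqref{eq2identity}: it re-derives the dual representation \eqref{eq2identityb} (a supremum over $g$ with $\|g\|_{L^2(\RR_t,\MX)}=1$), applies the Cauchy--Schwarz inequality for the nonnegative form $\jp{A(\lambda)\cdot,\cdot}$ inside the pairing, then a second Cauchy--Schwarz in the $\lambda$-integral to split off the factor $\int_J\jp{A(\lambda)\phi,\phi}\,d\lambda=\|P_{ac}(H)E(J)\phi\|_{\Hcal}^2$, and finally bounds the remaining $g$-dependent integral by the change of variables $\eta=a(\lambda)$ and Plancherel. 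You instead quote \eqref{eq2identity} directly and convert it into the estimate via the pointwise bound $\|A(\lambda)\phi\|_{\MX^\ast}\le\|A(\lambda)\|_{B(\MX,\MX^\ast)}^{1/2}\jp{A(\lambda)\phi,\phi}^{1/2}$, which is a legitimate consequence of the nonnegativity of the form together with \eqref{eqnormAXXast}; after pulling out $\sup_\lambda\bigl[\sigma(\lambda)^2\|A(\lambda)\|_{B(\MX,\MX^\ast)}/a'(\lambda)\bigr]$ you use the same spectral-theorem identity $\int_J\jp{A(\lambda)\phi,\phi}\,d\lambda=\|P_{ac}(H)E(J)\phi\|_{\Hcal}^2$ as the paper. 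What your route buys is brevity: the duality, change of variables and Plancherel are used only once, inside Proposition \ref{propidentity}, rather than being repeated; the essential analytic content (nonnegativity of $A(\lambda)$ exploited through a square-root Cauchy--Schwarz, so that only $\|\phi\|_{\Hcal}$ and not $\|\phi\|_{\MX}$ survives) is identical. Your concluding density step for general $\phi\in\Hcal$ is exactly the paper's, and is stated at the same level of detail.
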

 \begin{proof} For simplicity of the presentation we assume as above that $\Ncal=\emptyset.$ As in ~\eqref{eq2identitya} we write, assuming first that $\phi\in\MX,$
 \be\label{eq2identityb}\aligned
 \Big\|\sigma(H)e^{ita(H)}P_{ac}(H)E(J)\phi\Big\|_{ L^2(\RR_t,\MX^\ast)}\hspace{100pt}\\
 =\sup\limits_{\|g\|_{ L^2(\RR_t,\MX)}=1} \sqrt{2\pi}\,\Big|\int_{J}\sigma(\lambda)
 \jp{A(\lambda)\phi,
 \widetilde{g}(a(\lambda))}\,d\lambda \Big|.
 \endaligned\ee
 Since $A(\lambda)$ is a nonnegative form we have
   $$\Big|\jp{A(\lambda)\phi,
 \widetilde{g}(\cdot,a(\lambda))}\Big|^2\leq \,\,\jp{A(\lambda)\phi,\phi}\cdot \jp{A(\lambda)\widetilde{g}(a(\lambda)),\widetilde{g}(a(\lambda))}.$$
 Inserting this in ~\eqref{eq2identityb} and using the Cauchy-Schwarz inequality yields
 \be\label{eq2identityc}\aligned
 \Big\|\sigma(H)e^{ita(H)}P_{ac}(H)E(J)\phi\Big\|^2_{ L^2(\RR_t,\MX^\ast)}\hspace{100pt}\\
 \leq 2\pi\sup\limits_{\|g\|_{ L^2(\RR_t,\MX)}=1}\int_{J}\sigma(\lambda)^2\jp{A(\lambda)\widetilde{g}(a(\lambda)),\widetilde{g}(a(\lambda))}\,d\lambda
 \cdot\int_{J}\jp{A(\lambda)\phi,\phi}\,d\lambda\\=
 2\pi\sup\limits_{\|g\|_{ L^2(\RR_t,\MX)}=1}\int_{J}\sigma(\lambda)^2\jp{A(\lambda)\widetilde{g}(a(\lambda)),\widetilde{g}(a(\lambda))}\,d\lambda
 \cdot
 \|P_{ac}(H)E(J)\phi\|^2_{\Hcal},
 \endaligned\ee
 where the spectral theorem was used in the last equality.

  Employing the same change of variable $\eta=a(\lambda)$ as above we get
  \be\label{eq2identityd}\aligned
  \int_{J}\sigma(\lambda)^2\jp{A(\lambda)\widetilde{g}(a(\lambda)),\widetilde{g}(a(\lambda))}\,d\lambda\\
  \leq\Big|\int_{\widetilde{J}}\frac{\sigma(a^{-1}(\eta))^2}{a'(a^{-1}(\eta))}
 \|A(a^{-1}(\eta))\|_{B(\MX,\MX^\ast)}\|\widetilde{g}(\eta)\|_{\MX}^2
 \,d\eta \Big|\\
 \leq \sup\limits_{\lambda\in J}\Big\{\frac{\sigma(\lambda)^2}{a'(\lambda)}
 \|A(\lambda)\|_{B(\MX,\MX^\ast)}\Big\} \int_{\widetilde{J}}\|\widetilde{g}(\cdot,\eta)\|_{\MX}^2\,d\eta.
  \endaligned\ee
  Observe that by the Plancherel theorem
    $$\int_{\widetilde{J}}\|\widetilde{g}(\eta)\|_{\MX}^2\,d\eta\leq \int_{\RR}\|\widetilde{g}(\eta)\|_{\MX}^2d\eta=\|g\|^2_{ L^2(\RR_t,\MX)}=1.$$
    Plugging ~\eqref{eq2identityd} into ~\eqref{eq2identityc} we obtain ~\eqref{eqbasicspacetime}, still under the assumption that $\phi\in\MX.$ However, this assumption can be dropped due to the density of $\MX$ in $\Hcal.$
 \end{proof}
 \begin{rem} Note that in the above statements we could ``absorb'' $E(J)$ into $\sigma(H),$ but we preferred to emphasize the localization aspect of the estimates (in ``energy'' space) and to leave $\sigma(\lambda)$ as a continuous function.
         \end{rem}

   It will be expedient to give explicit statements  for ~\eqref{eq2identity} and ~\eqref{eqbasicspacetime} in the case of the weighted space, $\MX=L^2_s(\Rn_x).$ This is done in the following corollary.

\begin{cor} Consider the case where
 $\Hcal=L^2(\Rn)$ and $\MX=L^2_s(\Rn_x),\,s>0,$ the weighted $L^2$ space.
    Let $J\subseteq\RR$ be an open set and let $\phi\in L^2_s(\Rn).$  Then, under  Assumptions ~\ref{assumption} and ~\ref{assumptionasigma}, the following holds:
     \begin{itemize}
     \item We have
     \begin{equation}
\label{basic} \aligned \Big\|\jp{x}^{-s}\sigma(H)e^{ita(H)}P_{ac}(H)E(J)\phi\Big\|_{L^2(\RR_t\times\Rn_x)}\hspace{100pt}\\=
 \sqrt{2\pi}\,\Big\|\frac{\sigma(\lambda)}{|a'(\lambda)|^\frac12}
 \jp{x}^{-s}(A(\lambda)\phi)(x)\Big\|_{L^2(J_\lambda\times\Rn_x)}.
 \endaligned\end{equation}
 \item
     Let
     \be
      \|A(\lambda)\|_{s,-s}:=\sup\limits_{\|\psi\|_{0,s}=1}(A(\lambda)\psi,\psi)_{L^2_{-s},L^2_s}.
     \ee
     Then for all $\phi\in L^2(\Rn_x),$
     \be\label{estphilambda}
     \aligned \Big\|\jp{x}^{-s}\sigma(H)e^{ita(H)}P_{ac}(H)E(J)\phi\Big\|_{L^2(\RR_t\times\Rn_x)}\hspace{100pt}\\ \leq
      \sqrt{ 2\pi}\,\,\sup\limits_{\lambda\in J}\Big[\frac{|\sigma(\lambda)|}{|a'(\lambda)|^\frac12}
 \|A(\lambda)\|_{s,-s}^\frac12\Big]\|\phi\|_{L^2(\Rn)}.
     \endaligned\ee
 \end{itemize}
 \end{cor}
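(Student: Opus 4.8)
The plan is to read off both estimates from Proposition~\ref{propidentity}(2), that is \eqref{eq2identity}, and from Theorem~\ref{thmbasicspacetime}, that is \eqref{eqbasicspacetime}, once the abstract triple $\MX\subseteq\Hcal\subseteq\MX^\ast$ is taken to be $L^2_s(\Rn)\subseteq L^2(\Rn)\subseteq L^2_{-s}(\Rn)$ with $s>0$. So the proof is essentially a translation, and the one genuine ingredient is a Fubini--Tonelli identity converting the $L^2(\RR_t,\MX^\ast)$- and $L^2(J_\lambda,\MX^\ast)$-norms in those statements into weighted scalar $L^2$-norms on a product space.

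First I would record that, for $s>0$, the dual of $L^2_s(\Rn)$ is $L^2_{-s}(\Rn)$, the $\MX^\ast,\MX$ pairing being the extension of the $L^2(\Rn)$ inner product --- this is the pairing $(\cdot,\cdot)_{L^2_{-s},L^2_s}$ of Section~\ref{secbasicsetup} --- with dual norm $\|F\|_{L^2_{-s}}=\|\jp{x}^{-s}F\|_{L^2(\Rn)}$. Then, for a jointly measurable $F$ on $\RR_t\times\Rn_x$, Tonelli's theorem (the integrand being nonnegative) gives
$$\|F\|_{L^2(\RR_t,L^2_{-s}(\Rn_x))}^2=\int_\RR\int_{\Rn}\jp{x}^{-2s}|F(t,x)|^2\,dx\,dt=\big\|\jp{x}^{-s}F\big\|_{L^2(\RR_t\times\Rn_x)}^2,$$
and identically with $\RR_t$ replaced by $J_\lambda$. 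By Assumption~\ref{assumption}, $\lambda\mapsto A(\lambda)$ is weakly continuous into $B(L^2_s,L^2_{-s})$, so $\lambda\mapsto A(\lambda)\phi$ is a measurable $L^2_{-s}(\Rn)$-valued map for $\phi\in L^2_s(\Rn)$, $\jp{x}^{-s}(A(\lambda)\phi)(x)$ is jointly measurable in $(\lambda,x)$ and the displayed identity applies to it; likewise $\sigma(H)e^{ita(H)}P_{ac}(H)E(J)\phi$ is strongly continuous in $t$ with values in $L^2(\Rn)$, hence defines a jointly measurable function of $(t,x)$.

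With this in hand, \eqref{basic} follows by writing \eqref{eq2identity} with $\MX^\ast=L^2_{-s}$ and applying the displayed identity to each side; on the right, $a'(\lambda)>0$ on $J\setminus\Ncal$, so $a'(\lambda)^{1/2}=|a'(\lambda)|^{1/2}$, and deleting the finite (hence Lebesgue-null) set $\Ncal$ does not affect the $\lambda$-integral. For \eqref{estphilambda}, the left-hand side of \eqref{eqbasicspacetime} is transcribed the same way; for the constant, $\|A(\lambda)\|_{s,-s}=\sup_{\|\psi\|_{0,s}=1}(A(\lambda)\psi,\psi)_{L^2_{-s},L^2_s}$ is precisely the operator norm of $A(\lambda)\colon L^2_s\to L^2_{-s}$ appearing in \eqref{eqbasicspacetime} (for a bounded nonnegative form, the operator norm equals the supremum over the diagonal, cf.\ \eqref{eqnormAXXast}), so the supremand $\frac{|\sigma(\lambda)|}{a'(\lambda)^{1/2}}\|A(\lambda)\|_{B(\MX,\MX^\ast)}^{1/2}$ of \eqref{eqbasicspacetime} becomes $\frac{|\sigma(\lambda)|}{|a'(\lambda)|^{1/2}}\|A(\lambda)\|_{s,-s}^{1/2}$; finally, since $P_{ac}(H)E(J)$ is an orthogonal projection on $L^2(\Rn)$ one has $\|P_{ac}(H)E(J)\phi\|_\Hcal\le\|\phi\|_{L^2(\Rn)}$, and replacing $\|P_{ac}(H)E(J)\phi\|_\Hcal$ by $\|\phi\|_{L^2(\Rn)}$ on the right gives \eqref{estphilambda} for every $\phi\in L^2(\Rn)$.

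I do not expect a substantive obstacle --- this is a bookkeeping corollary specializing already-proved abstract statements. The only points needing a line of care are (i) the measurability/integrability legitimizing the Tonelli step, namely that $A(\lambda)\phi$ is a measurable $L^2_{-s}(\Rn)$-valued function of $\lambda$ and that the evolved datum is jointly measurable in $(t,x)$, and (ii) correctly matching the abstract operator norm $\|A(\lambda)\|_{B(\MX,\MX^\ast)}$ with the quantity $\|A(\lambda)\|_{s,-s}$ of the statement; both are immediate from Assumptions~\ref{assumption}--\ref{assumptionasigma} and \eqref{eqnormAXXast}.
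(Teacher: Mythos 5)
Your proposal is correct and matches the paper's intent exactly: the corollary is stated there as an immediate specialization of \eqref{eq2identity} and \eqref{eqbasicspacetime} to $\MX=L^2_s(\Rn)$, $\MX^\ast=L^2_{-s}(\Rn)$, with no separate argument given. Your bookkeeping (the $L^2_{-s}$ duality, the Tonelli identification of the vector-valued norms with weighted norms on the product space, and the identification of $\|A(\lambda)\|_{s,-s}$ with $\|A(\lambda)\|_{B(\MX,\MX^\ast)}$ via \eqref{eqnormAXXast}) is precisely what the paper leaves implicit.
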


         \begin{rem}
          Note that $\|A(\lambda)\|_{s,-s}$ is the operator norm of the self-adjoint operator
          $\jp{x}^{-s}A(\lambda)\jp{x}^{-s}$ on $L^2(\Rn).$
         \end{rem}

    The statement in Theorem ~\ref{thmbasicspacetime} can be rephrased by saying that the map $\mathfrak{H}_J:E(J)\Hcal\hookrightarrow  L^2(\RR_t,\MX^\ast)$ given by
    \begin{equation}\label{equindep}
   \mathfrak{H}_J \,\,\phi= \sigma(H)e^{ita(H)}P_{ac}(H)E(J)\phi ,
    \end{equation}
is bounded, under suitable conditions.

Suppose  that $P_{ac}(H)E(J)=E(J),$ namely, that the spectrum of $H$ over the open set $J$ is purely absolutely continuous. Then ~\eqref{eqbasicspacetime} can be rewritten as

\be\label{eqspacetimeabsc}
\|\mathfrak{H}_J\|\leq \sqrt{2\pi}\,\,\sup\limits_{\lambda\in {J\setminus\Ncal}}\br{\frac{|\sigma(\lambda)|}{|a'(\lambda)|^\frac12}
 \|A(\lambda)\|_{B(\MX,\MX^\ast)}^\frac12}.
\ee

 In fact, we now show that the operator-norm of $\mathfrak{H}_J$ is given by the right-hand side of ~\eqref{eqspacetimeabsc}.

\begin{thm}\label{thmbestconst}
Suppose that  $P_{ac}(H)E(J)=E(J),$ namely, that the spectrum of $H$ over the open set $J$ is purely absolutely continuous.

Then we have
\be\label{supsupeq}\aligned
\|\mathfrak{H}_J\|=\sup_{\|\phi\|_{\Hcal}=1}\|\sigma(H)e^{it\,a(H)}P_{ac}(H)E(J)\phi\|_{ L^2(\RR_t,\MX^\ast)}
\\=\sqrt{2\pi}\,\,\sup\limits_{\lambda\in {J\setminus\Ncal}}\Big[\frac{|\sigma(\lambda)|}{a'(\lambda)^\frac12}
 \|A(\lambda)\|_{B(\MX,\MX^\ast)}^\frac12\Big].
\endaligned\ee
\end{thm}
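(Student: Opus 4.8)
The plan is to prove two inequalities. The inequality $\le$ is already contained in~\eqref{eqspacetimeabsc}, which follows from Theorem~\ref{thmbasicspacetime} once we use the hypothesis $P_{ac}(H)E(J)=E(J)$ to replace $\|P_{ac}(H)E(J)\phi\|_\Hcal$ by $\|\phi\|_\Hcal$ on the class $\phi\in E(J)\Hcal$ (and the right-hand side of the estimate is unchanged if we take the supremum over all unit $\phi\in\Hcal$, since $\mathfrak H_J$ kills the orthogonal complement of $E(J)\Hcal$). So the real content is the reverse inequality: for any $\mu<\sup_{\lambda\in J\setminus\Ncal}\bigl[|\sigma(\lambda)|a'(\lambda)^{-1/2}\|A(\lambda)\|_{B(\MX,\MX^\ast)}^{1/2}\bigr]$, we must produce a unit vector $\phi\in\Hcal$ (or a sequence) with $\|\mathfrak H_J\phi\|_{L^2(\RR_t,\MX^\ast)}\ge \sqrt{2\pi}\,\mu$.

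First I would fix $\lambda_0\in J\setminus\Ncal$ where the quantity $|\sigma(\lambda_0)|a'(\lambda_0)^{-1/2}\|A(\lambda_0)\|_{B(\MX,\MX^\ast)}^{1/2}$ is within $\delta$ of the supremum, and then, by~\eqref{eqnormAXXast}, pick $f\in\MX$ with $\|f\|_{\MX}=1$ and $\jp{A(\lambda_0)f,f}^{1/2}$ within $\delta$ of $\|A(\lambda_0)\|_{B(\MX,\MX^\ast)}^{1/2}$. The idea is then to use a ``concentration'' family of test vectors of the form $\phi_\varepsilon = c_\varepsilon\, E((\lambda_0-\varepsilon,\lambda_0+\varepsilon))\,f$, normalized in $\Hcal$; as $\varepsilon\to 0$ these concentrate the spectral mass near $\lambda_0$. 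The key computation is the exact identity~\eqref{eq2identity} of Proposition~\ref{propidentity} (and its pointwise-in-$\MX^\ast$ refinement, really the argument producing~\eqref{eq2identityd}), applied to $\phi_\varepsilon$: the $L^2(\RR_t,\MX^\ast)$ norm squared equals $2\pi$ times an integral over a shrinking $\lambda$-neighborhood of $\lambda_0$ of $\sigma(\lambda)^2 a'(\lambda)^{-1}\|A(\lambda)\phi_\varepsilon\|_{\MX^\ast}^2$, while the $\Hcal$-normalization contributes $\int \jp{A(\lambda)\phi_\varepsilon,\phi_\varepsilon}\,d\lambda=1$. By continuity of $\sigma$ and $a'$ near $\lambda_0$, and weak continuity of $A(\lambda)$ (Assumption~\ref{assumption}), the ratio of these two quantities tends to $\sigma(\lambda_0)^2 a'(\lambda_0)^{-1}$ times the ratio $\|A(\lambda_0)\phi_\varepsilon\|_{\MX^\ast}^2 / \jp{A(\lambda_0)\phi_\varepsilon,\phi_\varepsilon}$, and one must arrange that this latter ratio is $\ge \jp{A(\lambda_0)f,f}$ up to $o(1)$ — which holds because $\phi_\varepsilon$ is, after normalization, essentially a multiple of $f$ in the directions that matter, so $\|A(\lambda_0)\phi_\varepsilon\|_{\MX^\ast}\ge |\jp{A(\lambda_0)\phi_\varepsilon,\phi_\varepsilon}|/\|\phi_\varepsilon\|_{\MX}$ and $\|\phi_\varepsilon\|_{\MX}$ stays controlled.

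The main obstacle I anticipate is precisely the last point: controlling $\|\phi_\varepsilon\|_{\MX}$ and relating $\|A(\lambda_0)\phi_\varepsilon\|_{\MX^\ast}$ to the chosen lower bound $\jp{A(\lambda_0)f,f}^{1/2}$, because $E((\lambda_0-\varepsilon,\lambda_0+\varepsilon))$ is a spectral projection of $H$ in $\Hcal$, not in $\MX$, so it need not map $\MX$ to $\MX$ nor behave well with respect to $\|\cdot\|_{\MX}$. The clean way around this is to avoid projecting $f$ at all and instead choose $\phi$ directly as an element realized through the spectral representation: use that $A(\lambda)$ gives a direct-integral decomposition of $P_{ac}(H)E(J)\Hcal$, so that vectors supported (in the spectral variable) on a small interval around $\lambda_0$ and equal to a fixed ``fiber vector'' approximating the maximizer of $\jp{A(\lambda_0)\cdot,\cdot}$ can be constructed with $\Hcal$-norm computed by $\int\jp{A(\lambda)\phi,\phi}\,d\lambda$ and $\MX^\ast$-mass computed by $\|A(\lambda)\phi\|_{\MX^\ast}$; then the Plancherel-type identity~\eqref{eq2identity} converts everything into the desired $L^2(\RR_t,\MX^\ast)$ bound and a limiting argument in $\varepsilon\to 0$, using weak continuity of $A$ and continuity of $\sigma,a'$, yields the reverse inequality. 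Finally I would combine the two inequalities to get~\eqref{supsupeq}.
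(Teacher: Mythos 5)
Your overall skeleton --- the upper bound from ~\eqref{eqspacetimeabsc}, and a lower bound obtained by concentrating the spectral support of the data near a point $\lambda_0$ where $\frac{|\sigma(\lambda)|}{a'(\lambda)^{1/2}}\|A(\lambda)\|_{B(\MX,\MX^\ast)}^{1/2}$ is nearly extremal, followed by a limiting argument via the Plancherel identity and weak continuity --- is exactly the paper's strategy. But at the decisive step your argument has a genuine gap. You must bound $\|A(\lambda)\phi_\varepsilon\|_{\MX^\ast}$ from below in terms of $\jp{A(\lambda_0)f,f}$, and the inequality you propose, $\|A(\lambda_0)\phi_\varepsilon\|_{\MX^\ast}\geq |\jp{A(\lambda_0)\phi_\varepsilon,\phi_\varepsilon}|/\|\phi_\varepsilon\|_{\MX}$, presupposes that $\phi_\varepsilon=c_\varepsilon E((\lambda_0-\varepsilon,\lambda_0+\varepsilon))f$ lies in $\MX$ with controlled $\MX$-norm. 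Nothing in Assumptions ~\ref{assumption}--~\ref{assumptionasigma} gives this: $E(D_\varepsilon)$ is only an orthogonal projection in $\Hcal$, it need not map $\MX$ into $\MX$, and $c_\varepsilon\sim\varepsilon^{-1/2}$ blows up, so the claim that ``$\|\phi_\varepsilon\|_{\MX}$ stays controlled'' is unjustified --- as you yourself note. Your fallback, a direct-integral construction of ``fiber-constant'' vectors, is only asserted; producing unit vectors of $\Hcal$ concentrated near $\lambda_0$ whose spectral density has $\MX^\ast$-norm comparable to $\|A(\lambda_0)\|_{B(\MX,\MX^\ast)}^{1/2}$ times the $\Hcal$-normalization is precisely the point at issue, so as written the reverse inequality is not proved.

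The repair is small and is, in essence, the paper's device: never pair $A(\lambda)\phi_\varepsilon$ with $\phi_\varepsilon$, pair it with the \emph{fixed} unit vector. By Remark ~\ref{remEalambda}, for $\lambda\in D_\varepsilon$ one has $\frac{d}{d\lambda}\big(E(\lambda)E(D_\varepsilon)f,g\big)_{\Hcal}=\jp{A(\lambda)f,g}$ for every $g\in\MX$, so the spectral density of $\phi_\varepsilon$ on $D_\varepsilon$ is $c_\varepsilon A(\lambda)f$ and hence $\|A(\lambda)\phi_\varepsilon\|_{\MX^\ast}\geq c_\varepsilon\jp{A(\lambda)f,f}$ (testing against $f$, $\|f\|_{\MX}=1$); no $\MX$-bound on $\phi_\varepsilon$ is needed. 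Together with $c_\varepsilon^{-2}=\int_{D_\varepsilon}\jp{A(\lambda)f,f}\,d\lambda$ --- and this is where the hypothesis $P_{ac}(H)E(J)=E(J)$ is actually used, to compute $\|\phi_\varepsilon\|_{\Hcal}$ purely from the density $A$ --- the limit $\varepsilon\to0$ gives the lower bound $\sqrt{2\pi}\,\frac{|\sigma(\lambda_0)|}{a'(\lambda_0)^{1/2}}\big(\|A(\lambda_0)\|_{B(\MX,\MX^\ast)}-\delta\big)^{1/2}$, as required. The paper implements this same idea on the dual side: in ~\eqref{suponphi1} it tests with $G(\lambda)=h^{-1/2}\chi_{D_h}(\lambda)\psi$ and takes $\phi=E(D_h)\psi/\|E(D_h)\psi\|_{\Hcal}$, so that only the scalar quantity $\jp{A(\lambda)\psi,\psi}$ ever appears (see ~\eqref{innerintG}--~\eqref{innerintG1}), which is why no direct-integral machinery or $\MX$-norm control of the projected vector is needed.
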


\begin{proof} For simplicity we assume that $\Ncal=\emptyset.$ Let
  $$\Gamma=\set{\phi\in \MX\,;\quad \|\phi\|_{\Hcal}=1}.$$
  By the density of $\MX$ in $\Hcal,$ it clearly suffices to take the supremum in Equation ~\eqref{supsupeq} over $\phi\in\Gamma.$ Using Equation ~\eqref{eq2identitya} we therefore write, for $\phi\in\Gamma,$
  \be\label{suponphi}\aligned
  \Big\|\sigma(H)e^{ita(H)}P_{ac}(H)E(J)\phi\Big\|_{ L^2(\RR_t,\MX^\ast)}\hspace{100pt}\\
     =\sup\limits_{\|g\|_{ L^2(\RR_t,\MX)}=1}\Big|\int_\RR\int_{J_\lambda}\sigma(\lambda)
 e^{ita(\lambda)}\jp{A(\lambda)\phi,
 g(t)}\,d\lambda dt\Big|\\
 =\sup\limits_{\|g\|_{ L^2(\RR_t,\MX)}=1} \sqrt{2\pi}\,\Big|\int_{J_\lambda}\sigma(\lambda)
 \jp{A(\lambda)\phi,
 \widetilde{g}(a(\lambda))}\,d\lambda \Big|,
  \endaligned\ee
   where $\widetilde{g}$ is defined in ~\eqref{eqwideg}.

   Defining $G(\lambda)=a'(\lambda)^\frac12\widetilde{g}(a(\lambda)),$ it follows from  the Plancherel theorem that
   $$\|G(\lambda)\|_{ L^2(J_\lambda,\MX)}\leq\|g(t)\|_{ L^2(\RR_t,\MX)},$$
   with equality only if $\supp\,\widetilde{g}\subseteq a(J)=\set{\eta=a(\lambda)\,;\,\lambda\in J}.$

   We therefore conclude that
 \be\label{suponphi1} \aligned\Big\|\sigma(H)e^{ita(H)}P_{ac}(H)E(J)\phi\Big\|_{ L^2(\RR_t,\MX^\ast)} \\ =\sqrt{2\pi}\sup\limits_{\|G\|_{ L^2(J_\lambda,\MX)}=1}
     \Big|\int_J\frac{|\sigma(\lambda)|}{a'(\lambda)^\frac12}
 \jp{A(\lambda)\phi,
 G(\lambda)}\,d\lambda \Big|.
 \endaligned\ee

     Fix $\lambda_0\in J,$  and take $\delta>0.$ In view of ~\eqref{eqnormAXXast} there exists $\psi\in \MX$
      such that $\|\psi\|_{\MX}=1$ and
     \be\label{eqA0psidelta}\jp{A(\lambda_0)\psi,\psi}\,\,\,\geq\|A(\lambda_0)\|_{B(\MX,\MX^\ast)}-\delta.\ee

     Let  $h>0$  so that  $D_h=(\lambda_0-\frac{h}{2},\lambda_0+\frac{h}{2})\subseteq J,$ and let $\chi_{h}(\lambda)$ be the characteristic function of $D_h.$

     Take
      $$G(\lambda)=h^{-\frac12}\chi_{h}(\lambda)\psi,$$
      so that  the integral in  ~\eqref{suponphi1} becomes
      \be\label{innerintG} K_h=h^{-\frac12}\int_{\lambda_0-\frac{h}{2}}^{\lambda_0+\frac{h}{2}}\frac{|\sigma(\lambda)|}{a'(\lambda)^\frac12}
 \jp{A(\lambda)\phi,
 \psi}\,d\lambda.\ee
     In view of Remark ~\ref{remEalambda} we can take in ~\eqref{innerintG},
      $$\phi=\frac{E(D_h)\psi}{\|E(D_h)\psi\|_{\Hcal}}\,\,\,,$$
      and obtain, using the obvious fact that $\frac{d}{d\lambda}E(\lambda) E(D_h)\psi= A(\lambda)\psi,\,\,\lambda\in D_h,$
      \be\label{innerintG1} K_h=\frac{h^{-\frac12}}{\|E(D_h)\psi\|_{\Hcal}}
      \int_{\lambda_0-\frac{h}{2}}^{\lambda_0+\frac{h}{2}}\frac{|\sigma(\lambda)|}{a'(\lambda)^\frac12}
 \jp{A(\lambda)\psi,
 \psi}\,d\lambda.\ee
 By the spectral theorem
    $$\|E(D_h)\psi\|_{\Hcal}^2=\int_{\lambda_0-\frac{h}{2}}^{\lambda_0+\frac{h}{2}}\jp{A(\lambda)\psi,
 \psi}\,d\lambda.$$

  We conclude that by ~\eqref{eqA0psidelta}
  \be
  \lim\limits_{h\to 0}K_h=\frac{|\sigma(\lambda_0)|}{a'(\lambda_0)^\frac12}\,\,\jp{A(\lambda_0)\psi,\psi}^\frac12\,\,\geq \,\,\frac{|\sigma(\lambda_0)|}{a'(\lambda_0)^\frac12}\Big(\|A(\lambda_0)\|_{B(\MX,\MX^\ast)}-\delta\Big)^\frac12 .
   \ee
   From ~\eqref{suponphi1}  we now obtain
   $$\aligned\sup_{\phi\in \Gamma}\|\sigma(H)e^{it\,a(H)}P_{ac}(H)E(J)\phi\|_{L^2(\RR_t,\MX^\ast)}\\\geq
   \sqrt{2\pi}\frac{|\sigma(\lambda_0)|}{a'(\lambda_0)^\frac12}
   \Big(\|A(\lambda_0)\|_{B(\MX,\MX^\ast)}-\delta\Big)^\frac12.\endaligned$$
  Since this holds true for any $\lambda_0\in J,$
  and for any $\delta>0,$ we conclude that
  \be\aligned
  \sup_{\phi\in\Gamma}\Big\|\sigma(H)e^{ita(H)}P_{ac}(H)E(J)\phi\Big\|_{L^2(\RR_t,\MX^\ast)}\hspace{100pt}\\
  \geq \sqrt{2\pi}\sup\limits_{\lambda\in J}\Big[\frac{|\sigma(\lambda)|}{a'(\lambda)^\frac12} \|A(\lambda)\|_{B(\MX,\MX^\ast)}^\frac12\Big].
   \endaligned\ee
     The equality ~\eqref{supsupeq} now follows by combining this last estimate with the opposite one ~\eqref{eqspacetimeabsc}.
\end{proof}
\begin{rem} The result in Theorem ~\ref{thmbestconst} can be characterized as a ``best constant'' result in the spacetime estimate. In the case of the Laplacian $H=-\Delta$ such a result was obtained in ~\cite{simon}.
\end{rem}
\section{Comparison principles}\label{sec-comparison}
As an application of the spectral identities established in Section ~\ref{secglobalevolution},
we present here applications of spectral comparison
principles for self-adjoint operators. Given two such operators, these principles enable us to carry smoothing estimates for one of them to the other, provided their spectral densities can be ``compared''.  As a special case, smoothing estimates can be obtained for functions of a self-adjoint operator.

The general methodology of ``comparison principles'' was introduced in ~\cite{RS2012,RS2016}, where its usefulness was demonstrated by a wide array of operators.

We continue to work within the basic setup of Section ~\ref{secbasicsetup},
and add the hypothesis that there is another self-adjoint operator $\widetilde H$ on
the same Hilbert space $\Hcal.$
For this operator we denote by $\widetilde E(\lambda)$ its associated spectral family and by
 ${\widetilde P}_{ac}(\widetilde H)$ the projection unto its absolutely continuous spectrum.

In addition, with the same subspace $\MX\subseteq\Hcal,$ we denote by ${\widetilde A}(\lambda)=\frac{d}{d\lambda}\widetilde E(\lambda):\MX\to \MX^\ast$ the spectral density.

To simplify the statements of the results below, we assume that for an open set $J\subseteq \RR,$ the spectra of both operators are absolutely continuous in $J,$ and extend the conditions in Assumptions  ~\ref{assumption}, ~\ref{assumptionasigma}  also to $ {\widetilde H}.$

 \begin{assume}\label{assumption2} Let $J\subseteq\R$ be an open set, so
 that $P_{ac}(H)E(J)=E(J)$ and ${\widetilde P}_{ac}({\widetilde H}){\widetilde E}(J)={\widetilde E}(J).$

The operator-valued functions $A(\lambda):\MX\hookrightarrow \MX^\ast$
and ${\widetilde A}(\lambda):\MX\hookrightarrow \MX^\ast$
are weakly continuous on $J.$

The real-valued functions $\sigma(\lambda)$, $\widetilde\sigma(\lambda)$ and $a(\lambda)$, $\widetilde{a}(\lambda)$ satisfy the conditions of Assumption ~\ref{assumptionasigma}.

\end{assume}
Recall that we assume the existence of finite sets $\Ncal$ (resp. $\widetilde{\Ncal}$) where the derivatives of the amplitudes $a(\lambda)$ (resp. $\widetilde{a}(\lambda)$) may not exist. For simplicity of the presentation, we assume in the proofs below that they  are null;\,\,\,$\Ncal= \widetilde{\Ncal}=\emptyset.$
The following proposition is a straightforward consequence of
Proposition \ref{propidentity} .

\begin{prop}\label{THM:sp-comparison-loc}
Assume the conditions of Assumption ~\ref{assumption2}.

{\rm (i)} {\textbf{(space-local comparison)}}
 Let $\phi,\psi\in\MX.$
Suppose that we have
\begin{equation*}\label{EQ:sc-cond1-loc}
\frac{|\sigma(\lambda)|}{\abs{{a}'(\lambda)}^{1/2}}
\abs{
\jp{ A(\lambda)\phi,\,\psi}}
\geq
\frac{|\widetilde\sigma(\lambda)|}{\abs{\widetilde{a}'(\lambda)}^{1/2}}
\abs{
\jp{
\widetilde A(\lambda)\phi,\,\psi}
}
\end{equation*}
for almost all $\lambda\in J$.
Then we have
\begin{multline*}\label{comp2}
\n{
\p{\sigma(H)e^{ita(H)}P_{ac}(H)E(J)\phi,\,\psi}_{\Hcal}
}_{L^2(\R_t)}
\\
\geq 
\n{
\p{\widetilde\sigma(\widetilde H)e^{it\widetilde a(\widetilde H)}
{\widetilde P}_{ac}(\widetilde{H}){\widetilde E}(J)\phi,\,\psi}_{\Hcal}
}_{L^2(\R_t)}.
\end{multline*}
{\rm (ii)} {\textbf{(space-global comparison)}}
Let $\phi\in\MX$ and
suppose that
\begin{equation*}\label{EQ:sc-cond1-glo}
\frac{|\sigma(\lambda)|}{|a'(\lambda)|^{1/2}}
\n{
A(\lambda)
\phi
}_{ \MX^\ast}
\geq
\frac{|\widetilde\sigma(\lambda)|}{|\widetilde a'(\lambda)|^{1/2}}
\n{
{\widetilde A}(\lambda)
\phi
}_{ \MX^\ast}
\end{equation*}
for almost all $\lambda\in J$.
Then we have
\begin{equation*}\label{EQ:sc-comp-glo}
\n{\sigma(H)
e^{ita(H)}P_{ac}(H)E(J)\phi}_{L^2(\RR_t,\MX^\ast)}
\geq
\n{\widetilde\sigma(\widetilde H)
e^{it\widetilde a(H)}
{\widetilde P}_{ac}{\widetilde E}(J)\phi}_{L^2(\RR_t,\MX^\ast)}.
\end{equation*}
\end{prop}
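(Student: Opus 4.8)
The plan is to reduce both parts entirely to the spectral identities of Proposition~\ref{propidentity}, applied separately to each of the two operators. Since Assumption~\ref{assumption2} extends Assumptions~\ref{assumption}--\ref{assumptionasigma} to $\widetilde H$ together with its data $\widetilde\sigma,\widetilde a,\widetilde A$, Proposition~\ref{propidentity} is available for $\widetilde H$ as well; we keep the convention $\Ncal=\widetilde\Ncal=\emptyset$ (otherwise replace $J$ by $J\setminus(\Ncal\cup\widetilde\Ncal)$, a full-measure subset). No density argument is needed because the membership hypotheses $\phi\in\MX$ (both parts) and $\psi\in\MX$ (part (i)) are exactly those under which Proposition~\ref{propidentity} is stated.

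For part~(i), the first step is to apply identity~\eqref{eq1identity} to $(H,\sigma,a)$ and the given $\phi,\psi$, which yields
$$\n{\p{\sigma(H)e^{ita(H)}P_{ac}(H)E(J)\phi,\,\psi}_{\Hcal}}_{L^2(\R_t)}^2 = 2\pi\int_J\frac{|\sigma(\lambda)|^2}{|a'(\lambda)|}\,\abs{\jp{A(\lambda)\phi,\psi}}^2\,d\lambda,$$
and the identical identity applied to $(\widetilde H,\widetilde\sigma,\widetilde a)$ for the right-hand side of the claimed inequality. The hypothesis of~(i) is precisely the assertion that, for a.e.\ $\lambda\in J$, the integrand appearing for $H$ dominates the one appearing for $\widetilde H$. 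Monotonicity of the Lebesgue integral, followed by taking square roots, then gives the asserted inequality between the two $L^2(\R_t)$-norms.

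For part~(ii) the argument is the same, with the scalar identity~\eqref{eq1identity} replaced by the $\MX^\ast$-valued identity~\eqref{eq2identity}: applying it to $(H,\sigma,a)$ and to $(\widetilde H,\widetilde\sigma,\widetilde a)$ expresses each side of the claimed inequality as $\sqrt{2\pi}$ times the $L^2(J_\lambda,\MX^\ast)$-norm of $\frac{\sigma(\lambda)}{|a'(\lambda)|^{1/2}}A(\lambda)\phi$ (resp.\ with tildes), whose square equals $2\pi\int_J\frac{|\sigma(\lambda)|^2}{|a'(\lambda)|}\n{A(\lambda)\phi}_{\MX^\ast}^2\,d\lambda$. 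Once again the hypothesis of~(ii) is exactly pointwise domination of these nonnegative integrands for a.e.\ $\lambda\in J$, so the conclusion follows by monotonicity of the integral and taking square roots.

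There is no genuine obstacle: all the substance sits in Proposition~\ref{propidentity}, whose own proof is the elementary passage from a pointwise inequality of nonnegative integrands to the corresponding inequality of integrals. The only points meriting a line of verification are that the $C^1$-regularity and strict monotonicity of $a$ and $\widetilde a$ needed to invoke Proposition~\ref{propidentity} are subsumed in Assumption~\ref{assumption2} (valid away from the finite exceptional sets), and that the weak continuity of $A(\lambda)$ and $\widetilde A(\lambda)$ on $J$ assumed there is what legitimizes the use of the identities in the first place.
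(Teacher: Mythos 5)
Your proposal is correct and follows exactly the route the paper intends: the paper states the proposition as ``a straightforward consequence of Proposition~\ref{propidentity}'', i.e.\ apply the identity \eqref{eq1identity} (for part (i)) and \eqref{eq2identity} (for part (ii)) to each of $H$ and $\widetilde H$ and compare the resulting nonnegative integrands pointwise. Nothing is missing; your remarks on the exceptional sets $\Ncal,\widetilde\Ncal$ and on the hypotheses legitimizing Proposition~\ref{propidentity} for $\widetilde H$ match the paper's own conventions.
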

\begin{rem}
   We refer to the first estimate as ``space-local comparison'' since the scalar product by $\psi$ ``localizes'' $\sigma(H)
e^{ita(H)}P_{ac}(H)E(J)\phi$ in $\Hcal .$ The absence of such a scalar product in the second estimate means that it is ``space-global comparison'' .
\end{rem}

 The following theorem is a direct consequence of Theorem ~\ref{thmbestconst} (and ~\eqref{eqnormAXXast}).

\begin{thm}[\textbf{uniform comparison}]\label{THM:sp-comparison-gen-op}
Assume the conditions of Assumption ~\ref{assumption2}.\newline
Suppose that for almost all $\lambda\in J$ and for every unit vector $\psi\in\MX\,(\|\psi\|_{\MX}=1)$ there exists a unit vector  $\widetilde\psi\in\MX\,(\|\widetilde\psi\|_{\MX}=1),$ such that
\begin{equation}\label{EQ:sc-cond1-op}
\frac{|\sigma(\lambda)|^2}{\abs{a'(\lambda)}}
\jp{A(\lambda)\psi,\,\psi}
\geq
\frac{|\widetilde\sigma(\lambda)|^2}{\abs{\widetilde a'(\lambda)}}
\jp{{\widetilde A}(\lambda)\widetilde\psi,\,\widetilde\psi}.
\end{equation}

Suppose that there exists a constant $C_0>0$ so that the following estimate holds for all $\phi\in \Hcal:$
\begin{equation}\label{EQ:cor-smest-1-op}
\n{\sigma(H)e^{ita(H)}P_{ac}(H)E(J)\phi}_{ L^2(\RR_t,\MX^\ast)}
\leq
C_0 \|\phi\|_{\Hcal}.
\end{equation}

Then it follows that
\begin{equation}\label{EQ:cor-aim2-op}
\n{\widetilde\sigma(\widetilde H)
e^{it\widetilde a(\widetilde H)}
{\widetilde P}_{ac}(\widetilde H){\widetilde E}(J)\phi}_{ L^2(\RR_t,\MX^\ast)}
\leq C_0 \|\phi\|_{\Hcal}
\end{equation}
holds for all $\phi\in \Hcal$, with the same
constant $C_0$.
\end{thm}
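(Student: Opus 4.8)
The plan is to combine the ``best constant'' result of Theorem~\ref{thmbestconst}, applied to $H$, with the pointwise hypothesis~\eqref{EQ:sc-cond1-op}, and then apply the upper bound~\eqref{eqbasicspacetime} of Theorem~\ref{thmbasicspacetime} (equivalently~\eqref{eqspacetimeabsc}) to $\widetilde H$. First I would translate the hypothesis~\eqref{EQ:cor-smest-1-op} via Theorem~\ref{thmbestconst}: since the spectrum of $H$ over $J$ is purely absolutely continuous, the norm $\|\mathfrak H_J\|$ equals $\sqrt{2\pi}\,\sup_{\lambda\in J}\big[\frac{|\sigma(\lambda)|}{a'(\lambda)^{1/2}}\|A(\lambda)\|_{B(\MX,\MX^\ast)}^{1/2}\big]$, so~\eqref{EQ:cor-smest-1-op} is equivalent to
\[
\sqrt{2\pi}\,\sup_{\lambda\in J}\Big[\frac{|\sigma(\lambda)|}{a'(\lambda)^{1/2}}\|A(\lambda)\|_{B(\MX,\MX^\ast)}^{1/2}\Big]\leq C_0.
\]

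Next I would pass this bound to $\widetilde H$ using~\eqref{EQ:sc-cond1-op}. By~\eqref{eqnormAXXast}, $\|\widetilde A(\lambda)\|_{B(\MX,\MX^\ast)}=\sup_{\|\widetilde\psi\|_{\MX}=1}\jp{\widetilde A(\lambda)\widetilde\psi,\widetilde\psi}$, and likewise for $A(\lambda)$. Fix $\lambda\in J$ (outside the null exceptional set) and let $\widetilde\psi$ be a unit vector nearly realizing the supremum for $\widetilde A(\lambda)$. The hypothesis~\eqref{EQ:sc-cond1-op} guarantees that for \emph{every} unit $\psi$ there is a unit $\widetilde\psi$ with $\frac{|\sigma(\lambda)|^2}{|a'(\lambda)|}\jp{A(\lambda)\psi,\psi}\geq \frac{|\widetilde\sigma(\lambda)|^2}{|\widetilde a'(\lambda)|}\jp{\widetilde A(\lambda)\widetilde\psi,\widetilde\psi}$; taking the supremum over $\psi$ on the left (and using that the right side can be made arbitrarily close to $\frac{|\widetilde\sigma(\lambda)|^2}{|\widetilde a'(\lambda)|}\|\widetilde A(\lambda)\|_{B(\MX,\MX^\ast)}$ by choice of $\widetilde\psi$) yields
\[
\frac{|\sigma(\lambda)|^2}{|a'(\lambda)|}\|A(\lambda)\|_{B(\MX,\MX^\ast)}\;\geq\;\frac{|\widetilde\sigma(\lambda)|^2}{|\widetilde a'(\lambda)|}\|\widetilde A(\lambda)\|_{B(\MX,\MX^\ast)}
\]
for almost all $\lambda\in J$. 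Taking square roots and then the supremum over $\lambda\in J$ gives
\[
\sqrt{2\pi}\,\sup_{\lambda\in J}\Big[\frac{|\widetilde\sigma(\lambda)|}{|\widetilde a'(\lambda)|^{1/2}}\|\widetilde A(\lambda)\|_{B(\MX,\MX^\ast)}^{1/2}\Big]\;\leq\;\sqrt{2\pi}\,\sup_{\lambda\in J}\Big[\frac{|\sigma(\lambda)|}{|a'(\lambda)|^{1/2}}\|A(\lambda)\|_{B(\MX,\MX^\ast)}^{1/2}\Big]\;\leq\;C_0.
\]

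Finally I would apply Theorem~\ref{thmbasicspacetime} to $\widetilde H$: since $\widetilde P_{ac}(\widetilde H)\widetilde E(J)=\widetilde E(J)$, estimate~\eqref{eqbasicspacetime} for $\widetilde H$ reads
\[
\big\|\widetilde\sigma(\widetilde H)e^{it\widetilde a(\widetilde H)}\widetilde P_{ac}(\widetilde H)\widetilde E(J)\phi\big\|_{L^2(\RR_t,\MX^\ast)}\leq\sqrt{2\pi}\,\sup_{\lambda\in J}\Big[\frac{|\widetilde\sigma(\lambda)|}{|\widetilde a'(\lambda)|^{1/2}}\|\widetilde A(\lambda)\|_{B(\MX,\MX^\ast)}^{1/2}\Big]\|\phi\|_{\Hcal},
\]
and the right-hand side is bounded by $C_0\|\phi\|_{\Hcal}$ by the previous step, which is exactly~\eqref{EQ:cor-aim2-op}. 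The one delicate point — the ``main obstacle'' — is handling the quantifier structure of~\eqref{EQ:sc-cond1-op} (``for every unit $\psi$ there exists a unit $\widetilde\psi$'') cleanly enough to convert it into the supremum inequality $\|A(\lambda)\|_{B(\MX,\MX^\ast)}\,|\sigma|^2/|a'|\geq\|\widetilde A(\lambda)\|_{B(\MX,\MX^\ast)}\,|\widetilde\sigma|^2/|\widetilde a'|$, together with the minor bookkeeping of the null exceptional set $\Ncal\cup\widetilde\Ncal$ and the measurability/``almost all $\lambda$'' qualifier when passing from a pointwise inequality to the supremum; these are routine once one works with approximating near-maximizers and uses the weak continuity from Assumption~\ref{assumption2}.
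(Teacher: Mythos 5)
Your overall route is exactly the paper's: the paper proves this theorem in one line, citing Theorem~\ref{thmbestconst} together with \eqref{eqnormAXXast} -- that is, use the best-constant identity for $H$ to convert \eqref{EQ:cor-smest-1-op} into $\sqrt{2\pi}\,\sup_{\lambda\in J}\big[|\sigma(\lambda)|\,|a'(\lambda)|^{-1/2}\|A(\lambda)\|_{B(\MX,\MX^\ast)}^{1/2}\big]\leq C_0$, transfer this bound to $\widetilde A$, and then apply the upper bound \eqref{eqbasicspacetime}/\eqref{eqspacetimeabsc} to $\widetilde H$ (using $\widetilde P_{ac}(\widetilde H)\widetilde E(J)=\widetilde E(J)$). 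In structure you and the authors agree completely.

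The one step where your write-up has a genuine gap is precisely the one you flag: passing from \eqref{EQ:sc-cond1-op} to $\frac{|\sigma(\lambda)|^2}{|a'(\lambda)|}\|A(\lambda)\|_{B(\MX,\MX^\ast)}\geq \frac{|\widetilde\sigma(\lambda)|^2}{|\widetilde a'(\lambda)|}\|\widetilde A(\lambda)\|_{B(\MX,\MX^\ast)}$. With the quantifiers as literally written (``for every unit $\psi$ there exists a unit $\widetilde\psi$''), the vector $\widetilde\psi$ is produced by the hypothesis as a function of $\psi$; you have no freedom to ``let $\widetilde\psi$ be a near-maximizer of $\jp{\widetilde A(\lambda)\,\cdot,\cdot}$'', so your claim that ``the right side can be made arbitrarily close to $\frac{|\widetilde\sigma|^2}{|\widetilde a'|}\|\widetilde A(\lambda)\|$ by choice of $\widetilde\psi$'' is not justified. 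Indeed, if $\widetilde A(\lambda)$ has a unit vector $\widetilde\psi_0$ with $\jp{\widetilde A(\lambda)\widetilde\psi_0,\widetilde\psi_0}=0$, the literal hypothesis is satisfied vacuously (take $\widetilde\psi=\widetilde\psi_0$ for every $\psi$), while the norm inequality -- and hence, by Theorem~\ref{thmbestconst} applied to $\widetilde H$, the conclusion \eqref{EQ:cor-aim2-op} -- can fail. What makes the deduction work is the reverse quantifier order (for a.e.\ $\lambda$ and every unit $\widetilde\psi$ there exists a unit $\psi$ satisfying \eqref{EQ:sc-cond1-op}), which is evidently what the authors intend since they invoke \eqref{eqnormAXXast}: then for each $\widetilde\psi$ the right-hand side is bounded by $\frac{|\sigma(\lambda)|^2}{|a'(\lambda)|}\sup_{\|\psi\|_{\MX}=1}\jp{A(\lambda)\psi,\psi}=\frac{|\sigma(\lambda)|^2}{|a'(\lambda)|}\|A(\lambda)\|_{B(\MX,\MX^\ast)}$, and taking the supremum over $\widetilde\psi$ gives the norm comparison immediately. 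With that reading (or with the norm inequality assumed pointwise in $\lambda$), the remainder of your argument -- best constant for $H$, then \eqref{eqbasicspacetime} for $\widetilde H$ -- is correct and coincides with the paper's proof; note also that the essential-supremum over the almost-every exceptional set is harmless here because the $\lambda$-integrations in Proposition~\ref{propidentity} and Theorem~\ref{thmbasicspacetime} ignore null sets.
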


An important  application of Theorem \ref{THM:sp-comparison-gen-op}
is stated in the following corollary;   a smoothing estimate for some operator
$H$, yields smoothing estimates for
functions of $H$.

\begin{cor}\label{THM:powers}
Assume the conditions of Assumptions ~\ref{assumption} and ~\ref{assumptionasigma}.
Suppose also that there is a constant $C_0>0$ such that
\begin{equation}\label{EQ:cor-smest-2-op}
\n{\sigma(H)e^{itH}P_{ac}(H)E(J)\phi}_{ L^2(\RR_t,\MX^\ast)}
\leq
C_0 \|\phi\|_{\Hcal}
\end{equation}
for all $\phi\in \Hcal$.

Then we have the estimate
\begin{equation}\label{EQ:cor-aim3-op}
\n{|a'(H)|^{1/2}\sigma(H)e^{ita(H)}P_{ac}(H)E(J)\phi}_{ L^2(\RR_t,\MX^\ast)}
\leq
C_0 \|\phi\|_{\Hcal}
\end{equation}
for all $\phi\in \Hcal$, with the same
constant $C_0$.
\end{cor}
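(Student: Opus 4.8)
The plan is to read this off from the uniform comparison principle, Theorem~\ref{THM:sp-comparison-gen-op}, taking there $\widetilde H:=H$ (so that $\widetilde E=E$, $\widetilde P_{ac}(\widetilde H)=P_{ac}(H)$, $\widetilde A(\lambda)=A(\lambda)$), and making the following choice of data. In the notation of Theorem~\ref{THM:sp-comparison-gen-op}, the \emph{unperturbed} amplitude is taken to be the identity $\lambda\mapsto\lambda$ (hence $a'(\lambda)\equiv1$ and $e^{ita(H)}$ becomes $e^{itH}$) with the multiplier $\sigma$ of the Corollary, while the \emph{perturbed} data are $\widetilde a(\lambda):=a(\lambda)$ and $\widetilde\sigma(\lambda):=|a'(\lambda)|^{1/2}\sigma(\lambda)$, where $a,\sigma$ are the functions in the statement. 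With $\widetilde H=H$, Assumption~\ref{assumption2} for this pair is just Assumptions~\ref{assumption}--\ref{assumptionasigma} (together with $P_{ac}(H)E(J)=E(J)$): the identity function is $C^{1}(J)$ with derivative $1>0$; the function $\widetilde\sigma=|a'|^{1/2}\sigma$ is continuous on $J$ (outside the finite exceptional set) because $\sigma$ is continuous and $a\in C^{1}$ with $a'>0$ there; and $\widetilde a=a$ inherits the conditions of Assumption~\ref{assumptionasigma} by hypothesis.

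Next I would verify the pointwise comparison~\eqref{EQ:sc-cond1-op}. With the above identifications and $\widetilde A(\lambda)=A(\lambda)$, the inequality to be checked, for a.a.\ $\lambda\in J$ and every unit $\psi\in\MX$, is the existence of a unit $\widetilde\psi\in\MX$ with
\[
\frac{|\sigma(\lambda)|^{2}}{1}\,\jp{A(\lambda)\psi,\psi}\ \geq\ \frac{|a'(\lambda)|\,|\sigma(\lambda)|^{2}}{|a'(\lambda)|}\,\jp{A(\lambda)\widetilde\psi,\widetilde\psi}\ =\ |\sigma(\lambda)|^{2}\,\jp{A(\lambda)\widetilde\psi,\widetilde\psi},
\]
where the factor $|a'(\lambda)|$ coming from $\widetilde\sigma$ has cancelled the factor $|\widetilde a'(\lambda)|^{-1}=|a'(\lambda)|^{-1}$. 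Choosing $\widetilde\psi:=\psi$ turns this into an equality, so~\eqref{EQ:sc-cond1-op} holds trivially. Moreover the assumed bound~\eqref{EQ:cor-smest-2-op} is precisely hypothesis~\eqref{EQ:cor-smest-1-op} for the unperturbed data, since $e^{it\,\mathrm{id}(H)}=e^{itH}$. Hence Theorem~\ref{THM:sp-comparison-gen-op} applies and its conclusion~\eqref{EQ:cor-aim2-op}, written out for the perturbed data, is exactly
\[
\n{\,|a'(H)|^{1/2}\sigma(H)e^{ita(H)}P_{ac}(H)E(J)\phi\,}_{L^{2}(\RR_{t},\MX^{\ast})}\ \leq\ C_{0}\,\|\phi\|_{\Hcal},
\]
which is~\eqref{EQ:cor-aim3-op}.

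There is essentially no analytic obstacle; the proof is a pure specialization of the comparison principle through the ``self-comparison'' $\widetilde\psi=\psi$, and the only care needed is the bookkeeping of which functions play the roles of $\sigma$, $a$, $\widetilde\sigma$, $\widetilde a$ in Theorem~\ref{THM:sp-comparison-gen-op} (the Corollary reuses the letter $a$ for the \emph{perturbed} amplitude). Equivalently, and perhaps more transparently, one may argue directly from the spectral identity~\eqref{eq2identity} of Proposition~\ref{propidentity}: applied with multiplier $|a'|^{1/2}\sigma$ and amplitude $a$ it gives $\sqrt{2\pi}\,\n{\sigma(\lambda)A(\lambda)\phi}_{L^{2}(J_{\lambda},\MX^{\ast})}$ for the left side of~\eqref{EQ:cor-aim3-op}, and applied with multiplier $\sigma$ and amplitude $\lambda\mapsto\lambda$ it gives the same quantity for $\n{\sigma(H)e^{itH}P_{ac}(H)E(J)\phi}_{L^{2}(\RR_{t},\MX^{\ast})}$; the two left sides thus coincide for $\phi\in\MX$, and~\eqref{EQ:cor-aim3-op} follows from~\eqref{EQ:cor-smest-2-op} by density of $\MX$ in $\Hcal$.
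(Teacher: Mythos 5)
Your proposal is correct and is essentially the paper's own proof, which likewise sets $\widetilde H=H$, $a(\lambda)=\lambda$ for the unperturbed data, $\widetilde a=a$ and $\widetilde\sigma=\sigma\,a'^{1/2}$, and invokes Theorem~\ref{THM:sp-comparison-gen-op}; you merely spell out the cancellation in \eqref{EQ:sc-cond1-op} with $\widetilde\psi=\psi$ that the paper leaves implicit. Your closing remark that the two sides of \eqref{EQ:cor-smest-2-op} and \eqref{EQ:cor-aim3-op} coincide exactly via Proposition~\ref{propidentity} is a nice observation but not a different method.
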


\begin{proof}
Let $H=\widetilde H$, then
the corollary  follows from
Theorem \ref{THM:sp-comparison-gen-op}, taking $a(\lambda)=\lambda$ and letting there $\widetilde a(\lambda)=a(\lambda)$ and
$\widetilde\sigma(\lambda)=\sigma(\lambda)a'(\lambda)^\frac12.$
\end{proof}
 \subsection[Comparing unperturbed and perturbed operators]{Comparing unperturbed and perturbed operators}\label{subsec-pert}
The comparison principles presented above provide a very effective way of dealing with global spacetime estimates for perturbations, as we shall now see.

Turning back to the setup in Section ~\ref{secbasicsetup}
we impose on  the self-adjoint operator  $H$   an assumption which is stronger than Assumption ~\ref{assumption}.

\begin{assume}\label{assumptionholder} Let $J\subseteq\RR$ be an open set , such that $P_{ac}(H)E(J)=E(J).$  The operator-valued function $A(\lambda):\MX\hookrightarrow \MX^\ast$ is H\"{o}lder continuous on $J,$ with respect to the operator topology of $B(\MX,\MX^\ast).$
\end{assume}
This assumption implies the Limiting Absorption Principle ~\cite[Theorem 3.6]{BA10}:
\begin{claim} Let $R(z)=(H-z)^{-1},\,\Im\,z\neq 0.$ Then the limits
 \be
    R^\pm(\lambda)=\lim\limits_{\eps\downarrow 0}R(\lambda\pm i\eps),\quad \lambda\in J,
 \ee
    exist in the uniform operator topology of $B(\MX,\MX^\ast).$
\end{claim}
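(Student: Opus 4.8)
The plan is to derive the Limiting Absorption Principle from the Hölder continuity of the spectral density $A(\lambda)$ by the standard argument that goes back to Agmon and Kuroda, in the form developed in the reference cited right before the claim. First I would recall the Stieltjes-type formula that expresses the resolvent through the spectral family: for $\Im z \neq 0$ and $f, g \in \MX$,
\[
\jp{R(z)f, g} = \int_{\RR} \frac{1}{\mu - z}\, d\jp{E(\mu)f, g}.
\]
Restricting attention to the open set $J$, where $E(J) = P_{ac}(H)E(J)$, the part of this measure supported in $J$ is absolutely continuous with density $\jp{A(\mu)f,g}$, so for $\lambda \in J$ and small $\eps > 0$ the contribution from a neighborhood $J_0 \Subset J$ of $\lambda$ is
\[
\int_{J_0} \frac{\jp{A(\mu)f,g}}{\mu - \lambda \mp i\eps}\, d\mu,
\]
while the contribution from $\RR \setminus J_0$ is manifestly continuous (indeed analytic) in $z$ near $\lambda$ and passes to the limit trivially. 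So everything reduces to a local statement about the Cauchy-type integral of the $B(\MX,\MX^\ast)$-valued Hölder function $\mu \mapsto A(\mu)$.

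Next I would invoke the Plemelj–Privalov theorem (the Cauchy integral of a Hölder continuous function on an interval extends Hölder-continuously, hence continuously, up to the boundary from each side), applied to the operator-valued function $A(\cdot)$ with values in the Banach space $B(\MX,\MX^\ast)$. Concretely, writing
\[
\frac{1}{\mu - \lambda \mp i\eps} = \mathrm{p.v.}\frac{1}{\mu-\lambda} \pm i\pi \,\delta(\mu-\lambda) + (\text{error} \to 0),
\]
one shows that
\[
\lim_{\eps \downarrow 0} \int_{J_0} \frac{A(\mu)}{\mu - \lambda \mp i\eps}\, d\mu = \mathrm{p.v.}\!\int_{J_0} \frac{A(\mu)}{\mu - \lambda}\, d\mu \pm i\pi A(\lambda)
\]
exists in the norm of $B(\MX,\MX^\ast)$, uniformly for $\lambda$ in compact subsets of $J$; the subtraction trick $A(\mu) = (A(\mu) - A(\lambda)) + A(\lambda)$ turns the singular integrand into an integrable one using the Hölder exponent, and the $A(\lambda)$ piece gives the explicit $\arctan$/boundary term. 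Combining the local limit with the trivial limit of the far part yields $R^\pm(\lambda) = \lim_{\eps \downarrow 0} R(\lambda \pm i\eps)$ in $B(\MX, \MX^\ast)$.

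Since this is quoted from the literature, I would keep the proof short: state the two ingredients (absolute continuity of the spectral measure on $J$ with Hölder density, and the Plemelj–Privalov boundary behaviour of Cauchy integrals), point to the decomposition into a local singular part and a regular remainder, and refer to \cite[Theorem 3.6]{BA10} for the details. The only point that deserves a remark is that the argument is genuinely Banach-space-valued, so one uses the Bochner integral and the vector-valued Plemelj–Privalov statement; this is routine but should be flagged. The main obstacle, such as it is, is bookkeeping: making sure the uniformity is stated over compact subsets of $J$ (not all of $J$, since the Hölder constant and the distance to $\partial J$ degenerate near the boundary) and that the "regular remainder" estimate uses only that $\lambda$ stays away from $\RR \setminus J_0$, which it does by construction.
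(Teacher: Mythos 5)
Your argument is correct: it is precisely the standard Privalov-type proof (split off a neighborhood $J_0\Subset J$ where the spectral measure has the H\"older-continuous $B(\MX,\MX^\ast)$-valued density $A(\mu)$, use the subtraction trick $A(\mu)=(A(\mu)-A(\lambda))+A(\lambda)$ on the singular Cauchy integral, and pass to the limit trivially in the far part), which is exactly the argument behind the result the paper invokes. The paper itself gives no proof and simply cites \cite[Theorem 3.6]{BA10}, so your write-up supplies the same route in expanded form, with the Banach-valued and compact-uniformity caveats correctly flagged.
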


As suggested in Remark ~\ref{rem-XXast}, we need to assume an abstract setup that is  more general than the one used hitherto. We follow the presentation of ~\cite[Section 4]{BA10}. The main additional ingredient is a subspace $\MX^\ast_H\subseteq \MX^\ast$ that is densely and continuously embedded.

Now let $V:\MX^\ast_H\hookrightarrow\MX$ be  a short-range and symmetric operator.  In particular, we assume the following.
\be\label{eqassumeVSR}
\begin{cases}
 V\,\,\mbox{ is compact.}\\
\mbox{The subspace}\,\, \Dcal=D(H)\cap \MX^\ast_H \,\,\mbox{ is dense in}\,\, \Hcal\,\,\\\mbox{and the restriction of}\,\, V\,\,\mbox{ to}\,\, \Dcal\,\,\mbox{ is symmetric in}\,\, \Hcal.\\
\mbox{There exist}\,\, z\in\CC,\,\Im\,z\neq 0,\,\,\mbox{ and a linear subspace}\,\, \Dcal_z\subseteq \Dcal\,\,\\\mbox{ such that the image}\,\, (H-z)\Dcal_z\,\,\mbox{ is dense in}\,\, \MX.
\end{cases}
\ee
\begin{rem}\label{rem-generaloV}
The compactness hypothesis on $V$ can be replaced by the assumption that the operators $VR^\pm(\lambda):\MX\hookrightarrow\MX$ are compact for any $\lambda\in J.$
\end{rem}
   Under these assumptions, the operator $\widetilde H=H+V$ is essentially self-adjoint on $\Dcal$ ~\cite[Lemma 4.2]{BA10} and we use $\widetilde H$ to denote its self-adjoint extension, and let $\widetilde E(\lambda)$ be its associated spectral family.
   \begin{prop}\label{propAtildelambda}
        Assume that
        the operators $\set{I+VR^\pm(\lambda)\,;\,\,\lambda\in J}$ are invertible in $\MX$ and
        \be\label{eqassumpJ2}
        \sup\limits_{\lambda\in J}\|[I+VR^\pm(\lambda)]^{-1}\|_{B(\MX,\MX)}<\infty.
        \ee

        Then the weak derivative ${\widetilde A}(\lambda)=\frac{d}{d\lambda}\widetilde E(\lambda),\,\lambda\in J,$ exists in $B(\MX,\MX^\ast).$ Furthermore, there exists a constant $C>0,$ such that
        \be
         \|{\widetilde A}(\lambda)\|_{B(\MX,\MX^\ast)}\leq C \|A(\lambda)\|_{B(\MX,\MX^\ast)},\quad \lambda\in J.
        \ee
   \end{prop}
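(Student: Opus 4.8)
The plan is to express the perturbed spectral density $\widetilde A(\lambda)$ in terms of the unperturbed one $A(\lambda)$ via a resolvent (Lippmann--Schwinger type) identity, and then read off the bound from the uniform invertibility hypothesis~\eqref{eqassumpJ2} together with the Limiting Absorption Principle for $H$. Recall Stone's formula: for the self-adjoint operator $\widetilde H = H+V$, on the absolutely continuous part one has, in the $B(\MX,\MX^\ast)$ topology,
\[
\widetilde A(\lambda)=\frac{1}{2\pi i}\bigl(\widetilde R^{+}(\lambda)-\widetilde R^{-}(\lambda)\bigr),\qquad \widetilde R^{\pm}(\lambda)=\lim_{\eps\downarrow 0}(\widetilde H-\lambda\mp i\eps)^{-1},
\]
and similarly $A(\lambda)=\frac{1}{2\pi i}(R^{+}(\lambda)-R^{-}(\lambda))$, the latter limits existing in $B(\MX,\MX^\ast)$ by the Claim (a consequence of Assumption~\ref{assumptionholder}). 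So the first step is to establish that the boundary values $\widetilde R^{\pm}(\lambda)$ exist in $B(\MX,\MX^\ast)$. This is the standard perturbation argument: from the second resolvent identity $\widetilde R(z) = R(z) - R(z)V\widetilde R(z)$ one derives, using that $V$ factors through $\MX^\ast_H$ and maps into $\MX$, the factorized identity
\[
\widetilde R(z) = R(z) - R(z)\,[I+VR(z)]^{-1}\,V\,R(z),
\]
valid first for $\Im z\neq 0$; then one lets $\eps\downarrow 0$. The boundedness of $R^\pm(\lambda):\MX\to\MX^\ast$ and the compactness of $V:\MX^\ast_H\to\MX$ (Remark~\ref{rem-generaloV}: or directly the compactness of $VR^\pm(\lambda)$ on $\MX$) make $VR^{\pm}(\lambda)$ compact on $\MX$, so by hypothesis $I+VR^{\pm}(\lambda)$ is invertible with inverse bounded uniformly in $\lambda\in J$ by~\eqref{eqassumpJ2}. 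Hence $\widetilde R^{\pm}(\lambda)$ exists in $B(\MX,\MX^\ast)$ and $\widetilde A(\lambda)$ exists in $B(\MX,\MX^\ast)$.

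For the norm estimate, I would substitute the factorized identity into Stone's formula and group terms. Writing $\widetilde A(\lambda)$ via the difference of the $\pm$ boundary values and using that each factor $R^{\pm}(\lambda)$ and each $[I+VR^{\pm}(\lambda)]^{-1}$ is bounded uniformly on $J$, while the only factor carrying the full ``jump'' is $A(\lambda)=\frac{1}{2\pi i}(R^{+}-R^{-})$, one obtains a pointwise bound of the form $\|\widetilde A(\lambda)\|_{B(\MX,\MX^\ast)}\leq C\,\|A(\lambda)\|_{B(\MX,\MX^\ast)}$ with $C$ depending only on $\sup_{\lambda\in J}\|R^{\pm}(\lambda)\|_{B(\MX,\MX^\ast)}$, $\sup_{\lambda\in J}\|[I+VR^{\pm}(\lambda)]^{-1}\|_{B(\MX)}$, and $\|V\|_{B(\MX^\ast_H,\MX)}$. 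A small technical point: in the cross terms one encounters products like $R^{\pm}(\lambda)V R^{\mp}(\lambda)$, which mix the two boundary values; to extract $A(\lambda)$ cleanly one rewrites $R^{+}VR^{+}-R^{-}VR^{-}=(R^{+}-R^{-})VR^{+}+R^{-}V(R^{+}-R^{-})$ and inserts the inverse factors, so that each summand is bounded by $\mathrm{const}\cdot\|R^{+}(\lambda)-R^{-}(\lambda)\|_{B(\MX,\MX^\ast)}=\mathrm{const}\cdot 2\pi\|A(\lambda)\|_{B(\MX,\MX^\ast)}$. This is mostly bookkeeping once the algebraic identity is in place.

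I would also record the weak-differentiability statement precisely: that $\widetilde A(\lambda)=\frac{d}{d\lambda}\widetilde E(\lambda)$ in the weak sense of~\eqref{eqAXX*} (with $\widetilde E$, $\widetilde H$, and noting $\widetilde P_{ac}(\widetilde H)\widetilde E(J)=\widetilde E(J)$ will follow separately, or is part of the hypotheses of Assumption~\ref{assumption2} when this proposition feeds into the comparison theorems). This follows because for $f,g\in\MX$ the function $\lambda\mapsto(\widetilde E(\lambda)f,g)_{\Hcal}$ has, by the existence of $\widetilde R^{\pm}$ and dominated convergence in Stone's formula, derivative $\jp{\widetilde A(\lambda)f,g}$, continuous in $\lambda$.

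The main obstacle I anticipate is not the norm inequality itself but justifying the passage to the boundary $\eps\downarrow 0$ in the factorized resolvent identity at the level of $B(\MX,\MX^\ast)$ operator-norm convergence --- one must check that $VR(\lambda\pm i\eps)\to VR^{\pm}(\lambda)$ in $B(\MX,\MX)$ (using compactness of $V$ together with the $B(\MX,\MX^\ast)$ convergence of $R(\lambda\pm i\eps)$), and that inversion $[I+VR(\lambda\pm i\eps)]^{-1}\to[I+VR^{\pm}(\lambda)]^{-1}$ is then continuous, uniformly in $\lambda\in J$ thanks to~\eqref{eqassumpJ2}. This is exactly the content of the perturbation-theoretic LAP for $\widetilde H$; I would invoke the machinery of~\cite[Section 4]{BA10} for the clean version of these convergence statements rather than reprove them.
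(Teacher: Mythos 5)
Your overall strategy (Stone's formula combined with the perturbed limiting absorption principle, obtained from the resolvent identity and the uniform invertibility of $I+VR^{\pm}(\lambda)$) is the same as the paper's, and the existence part of your argument is sound. The gap is in the norm estimate. The decomposition you propose, based on $\widetilde R(z)=R(z)-R(z)[I+VR(z)]^{-1}VR(z)$ together with the splitting $R^{+}VR^{+}-R^{-}VR^{-}=(R^{+}-R^{-})VR^{+}+R^{-}V(R^{+}-R^{-})$, leaves factors of $R^{\pm}(\lambda)$ standing outside the ``jump'' $A(\lambda)$, and you explicitly let your constant depend on $\sup_{\lambda\in J}\|R^{\pm}(\lambda)\|_{B(\MX,\MX^\ast)}$. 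That quantity is not among the hypotheses: the proposition assumes only the uniform bound \eqref{eqassumpJ2} on $[I+VR^{\pm}(\lambda)]^{-1}$, and the remark immediately following it stresses precisely that no uniform estimates on the limiting resolvent values $R^{\pm}(\lambda)$ are required. On a set $J$ where $\|R^{\pm}(\lambda)\|_{B(\MX,\MX^\ast)}$ is unbounded (e.g.\ near a spectral threshold, or on an unbounded $J$ for operators without the Agmon-type decay), your bookkeeping yields $C=\infty$.

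The paper avoids this by using the one-sided factorization $\widetilde R^{\pm}(\lambda)=R^{\pm}(\lambda)[I+VR^{\pm}(\lambda)]^{-1}$ and rearranging so that the only $\lambda$-dependent object not controlled by \eqref{eqassumpJ2} is $A(\lambda)$ itself. The cleanest form of the resulting identity (equivalent to the paper's computation, using $(R^{-})^{\ast}=R^{+}$ and $V^{\ast}=V$) is
\be
\widetilde R^{+}-\widetilde R^{-}=(I+R^{+}V)^{-1}\,(R^{+}-R^{-})\,(I+VR^{-})^{-1},
\ee
whence
\be
\widetilde A(\lambda)=\big[(I+VR^{-}(\lambda))^{-1}\big]^{\ast}\,A(\lambda)\,(I+VR^{-}(\lambda))^{-1},
\ee
so that $\|\widetilde A(\lambda)\|_{B(\MX,\MX^\ast)}\leq\|[I+VR^{-}(\lambda)]^{-1}\|^{2}_{B(\MX,\MX)}\,\|A(\lambda)\|_{B(\MX,\MX^\ast)}$, with the constant coming from \eqref{eqassumpJ2} alone (and, as a bonus, the identity exhibits $\widetilde A(\lambda)$ as a nonnegative form). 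You should replace your cross-term bookkeeping by this sandwich identity; the rest of your argument --- the passage to the boundary, the compactness of $VR^{\pm}(\lambda)$, and the weak differentiability of $\lambda\mapsto(\widetilde E(\lambda)f,g)_{\Hcal}$ --- then goes through as you describe.
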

   \begin{proof} Let ${\widetilde R}(z)=(\widetilde H -z)^{-1},\,\Im\,z\neq 0.$ Under the conditions of the Proposition, we can invoke the resolvent equation to obtain the Limiting Absorption Principle for $\widetilde H$~\cite[Eq.(4.2)]{BA10}:
   \be\label{eqAtilde}
       {\widetilde R}^\pm(\lambda)=\lim\limits_{\eps\downarrow 0}{\widetilde R}(\lambda\pm i\eps))=R^\pm(\lambda)[I+VR^\pm(\lambda)]^{-1},\quad \lambda\in J.
    \ee
    The existence of the spectral derivative follows from the well-known relation ~\cite[Corollary 3.7]{BA10}
     $${\widetilde A}(\lambda)=\frac{1}{2\pi i}[{\widetilde R}^+(\lambda)-{\widetilde R}^-(\lambda)],$$
     so by ~\eqref{eqAtilde}
     $$\aligned{\widetilde A}(\lambda)
&=A(\lambda)+\frac{1}{2\pi i}{ R}^+(\lambda)\Big[(I+VR^+(\lambda))^{-1}-I\Big]
\\ &\hspace{4cm}
-\frac{1}{2\pi i}{ R}^-(\lambda)\Big[(I+VR^-(\lambda))^{-1}-I\Big]
\\
&=A(\lambda)+A(\lambda)\Big[(I+VR^+(\lambda))^{-1}-I\Big]
\\  &\hspace{4cm}
     +\frac{1}{2\pi i}R^-(\lambda)\Big[(I+VR^+(\lambda))^{-1}-(I+VR^-(\lambda))^{-1}\Big].\endaligned$$
     Since
     $$\frac{1}{2\pi i}\Big[(I+VR^+(\lambda))^{-1}-(I+VR^-(\lambda))^{-1}\Big]=-(I+VR^+(\lambda))^{-1}VA(\lambda)(I+VR^-(\lambda))^{-1},$$
       we finally get
       \be
       {\widetilde A}(\lambda)=A(\lambda)(I+VR^+(\lambda))^{-1}-(I+VR^+(\lambda))^{-1}VA(\lambda)(I+VR^-(\lambda))^{-1},
       \ee
       and the proof is complete in view of the assumption ~\eqref{eqassumpJ2}.
   \end{proof}
   \begin{rem} Note that in Proposition ~\ref{propAtildelambda} we did not need any uniform (in $\lambda\in J$) estimates on the limiting resolvent values $R^\pm(\lambda).$

   The invertibility assumption of $(I+VR^\pm(\lambda))^{-1}$ already follows from the assumption that ${\widetilde H}$ has no eigenvalues in $J$  and an additional mild assumption on $A(\lambda)$ ~\cite[Theorem 4.13]{BA10}.
   \end{rem}
   In view of Proposition ~\ref{propAtildelambda} we can formulate a version of Theorem ~\ref{THM:sp-comparison-gen-op} for perturbations:
   \begin{thm}[\textbf{Spacetime estimates for perturbed operators}]\label{THM:sp-comparison-gen-op-pert} Let $H$ be a self-adjoint operator on $\Hcal$ satisfying
 Assumption ~\ref{assumptionholder}.\newline
 Let $\widetilde H=H+V$ where $V$ is a symmetric short-range perturbation so that the hypotheses of Proposition ~\ref{propAtildelambda} are satisfied.

Assume further that the real-valued functions $\sigma(\lambda)$, $\widetilde\sigma(\lambda)$ and $a(\lambda)$, $\widetilde{a}(\lambda)$ satisfy the conditions of Assumption ~\ref{assumptionasigma} and
\begin{equation}\label{EQ:sc-cond1-op-pert}
\frac{|\sigma(\lambda)|^2}{\abs{a'(\lambda)}}
\geq
\frac{|\widetilde\sigma(\lambda)|^2}{\abs{\widetilde a'(\lambda)}},\quad \lambda\in J.
\end{equation}

Suppose that there exists a constant $C_0>0$ so that  for all $\phi\in \Hcal$ we have
\begin{equation}\label{EQ:cor-smest-1-op-pert}
\n{\sigma(H)e^{ita(H)}E(J)\phi}_{ L^2(\RR_t,\MX^\ast)}
\leq
C_0 \|\phi\|_{\Hcal}.
\end{equation}

Then there exists a constant $C>0$ such that
\begin{equation}\label{EQ:cor-aim2-op-pert}
\n{\widetilde\sigma(\widetilde H)
e^{it\widetilde a(\widetilde H)}
{\widetilde E}(J)\phi}_{ L^2(\RR_t,\MX^\ast)}
\leq CC_0 \|\phi\|_{\Hcal}
\end{equation}
holds for all $\phi\in \Hcal.$
\end{thm}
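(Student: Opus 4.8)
The plan is to combine Proposition \ref{propAtildelambda} (which transfers the spectral-density bound from $H$ to $\widetilde H=H+V$) with Theorem \ref{THM:sp-comparison-gen-op} (the uniform comparison principle), so that the spacetime estimate \eqref{EQ:cor-smest-1-op-pert} for $H$ is carried over to $\widetilde H$ at the cost of a multiplicative constant. First I would record what Proposition \ref{propAtildelambda} gives us: under the stated hypotheses the spectral derivative $\widetilde A(\lambda)=\frac{d}{d\lambda}\widetilde E(\lambda)$ exists in $B(\MX,\MX^\ast)$ on $J$, is weakly continuous there (inheriting this from the H\"older continuity of $A(\lambda)$ via \eqref{eqAtilde} and the uniform bound \eqref{eqassumpJ2}), and satisfies the pointwise operator-norm bound $\|\widetilde A(\lambda)\|_{B(\MX,\MX^\ast)}\le C\,\|A(\lambda)\|_{B(\MX,\MX^\ast)}$ for all $\lambda\in J$, with $C>0$ independent of $\lambda$. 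Thus Assumption \ref{assumption2} is satisfied for the pair $(H,\widetilde H)$ with the functions $\sigma,\widetilde\sigma,a,\widetilde a$ given.

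Next I would verify the comparison hypothesis \eqref{EQ:sc-cond1-op} of Theorem \ref{THM:sp-comparison-gen-op}, but in a slightly weakened form that nonetheless suffices. Fix $\lambda\in J$ and a unit vector $\psi\in\MX$. Using \eqref{eqnormAXXast}, choose a near-maximizer $\psi'\in\MX$ with $\|\psi'\|_{\MX}=1$ and $\jp{A(\lambda)\psi',\psi'}$ as close as we like to $\|A(\lambda)\|_{B(\MX,\MX^\ast)}$; since $\jp{\widetilde A(\lambda)\psi,\psi}\le \|\widetilde A(\lambda)\|_{B(\MX,\MX^\ast)}\le C\|A(\lambda)\|_{B(\MX,\MX^\ast)}$ and $\frac{|\sigma(\lambda)|^2}{|a'(\lambda)|}\ge \frac{|\widetilde\sigma(\lambda)|^2}{|\widetilde a'(\lambda)|}$ by \eqref{EQ:sc-cond1-op-pert}, we get
\[
\frac{|\sigma(\lambda)|^2}{|a'(\lambda)|}\,C\,\jp{A(\lambda)\psi',\psi'}
\;\ge\;
\frac{|\widetilde\sigma(\lambda)|^2}{|\widetilde a'(\lambda)|}\,\jp{\widetilde A(\lambda)\psi,\psi}.
\]
This is exactly the hypothesis \eqref{EQ:sc-cond1-op} of Theorem \ref{THM:sp-comparison-gen-op} if one replaces $\sigma$ by $\sqrt{C}\,\sigma$. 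Cleanest, then, is to apply Theorem \ref{THM:sp-comparison-gen-op} with the triple $(\sqrt{C}\,\sigma, a)$ for $H$ and $(\widetilde\sigma,\widetilde a)$ for $\widetilde H$: hypothesis \eqref{EQ:sc-cond1-op} holds, and the driving estimate \eqref{EQ:cor-smest-1-op} holds with constant $\sqrt{C}\,C_0$ because $\|\sqrt{C}\,\sigma(H)e^{ita(H)}E(J)\phi\|=\sqrt{C}\,\|\sigma(H)e^{ita(H)}E(J)\phi\|\le \sqrt{C}\,C_0\|\phi\|_{\Hcal}$ by \eqref{EQ:cor-smest-1-op-pert} (using $E(J)=P_{ac}(H)E(J)$ from Assumption \ref{assumptionholder}). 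The conclusion \eqref{EQ:cor-aim2-op} of Theorem \ref{THM:sp-comparison-gen-op} then reads
\[
\n{\widetilde\sigma(\widetilde H)e^{it\widetilde a(\widetilde H)}\widetilde P_{ac}(\widetilde H)\widetilde E(J)\phi}_{L^2(\RR_t,\MX^\ast)}\le \sqrt{C}\,C_0\,\|\phi\|_{\Hcal},
\]
which is \eqref{EQ:cor-aim2-op-pert} with $C$ replaced by $\sqrt{C}$; since $\widetilde P_{ac}(\widetilde H)\widetilde E(J)=\widetilde E(J)$ is part of the hypotheses on $\widetilde H$ coming from Proposition \ref{propAtildelambda}'s framework (no eigenvalues in $J$), we are done.

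The main obstacle, and the step requiring the most care, is the passage from the \emph{pointwise} operator-norm domination $\|\widetilde A(\lambda)\|\le C\|A(\lambda)\|$ to the vector-wise form \eqref{EQ:sc-cond1-op} that Theorem \ref{THM:sp-comparison-gen-op} literally demands: one must exploit the freedom to pick $\widetilde\psi$ independently of $\psi$, and in fact one picks it to witness the norm $\|A(\lambda)\|_{B(\MX,\MX^\ast)}$ on the $H$-side while bounding the $\widetilde H$-side crudely by its operator norm. A secondary point to check is that the hypotheses genuinely give $\widetilde P_{ac}(\widetilde H)\widetilde E(J)=\widetilde E(J)$ and the weak continuity of $\widetilde A(\lambda)$ on $J$, so that Assumption \ref{assumption2} is fully in force; both follow from the construction in Proposition \ref{propAtildelambda} together with the remark there that invertibility of $I+VR^\pm(\lambda)$ is equivalent to the absence of eigenvalues of $\widetilde H$ in $J$. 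Everything else is bookkeeping with the constant $\sqrt{C}$.
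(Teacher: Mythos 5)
Your argument is correct and is essentially the paper's intended proof: the theorem is stated as a direct consequence of Proposition \ref{propAtildelambda} (which gives $\|\widetilde A(\lambda)\|_{B(\MX,\MX^\ast)}\le C\,\|A(\lambda)\|_{B(\MX,\MX^\ast)}$ on $J$) combined with the comparison/best-constant machinery of Section \ref{secglobalevolution}, exactly the route you take, with the constant $CC_0$ arising from \eqref{EQ:sc-cond1-op-pert} together with that norm bound. Two cosmetic remarks: your near-maximizer step only yields the comparison hypothesis up to an $\epsilon$-loss (harmless, since the conclusion allows an arbitrary constant, and avoidable by applying Theorem \ref{thmbestconst} directly to both $H$ and $\widetilde H$ instead of the vector-wise condition \eqref{EQ:sc-cond1-op}), and the identity $\widetilde P_{ac}(\widetilde H)\widetilde E(J)=\widetilde E(J)$ is best justified by the existence and local boundedness of $\widetilde A(\lambda)$ on $J$ given by Proposition \ref{propAtildelambda}, rather than by the absence of eigenvalues alone.
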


\section{Applications to operators of mathematical physics}\label{sec-examples}
We shall now give a few examples that illustrate the scope of the abstract results when applied to a variety of  operators that are frequently studied in mathematical physics.
\subsection{The fractional Laplacian}
 Consider the operator $H=-\Delta$ in $\Hcal=L^2(\Rn)$, $n\geq 3.$ It is absolutely continuous in $J=(0,\infty)$ and the condition of Assumption ~\ref{assumptionholder} is satisfied with $\MX=L^2_s(\Rn),\,s>\frac12$ ~\cite[Section 2]{ba9}.

 Furthermore, it was shown in ~\cite[Theorem 1]{ba5} that for all $\phi\in L^2(\Rn),$
 $$\int_{\RR}\int_{\Rn}\jp{x}^{-2s}|H^{\frac14}e^{itH}\phi(x)|^2\,dx\,dt\leq C_0\|\phi\|_{L^2(\Rn)}^2.$$
    Consider now the operator $H^\alpha$ for some $\alpha>0.$ Then it follows from Corollary ~\ref{THM:powers} that
    \be\label{eqfraclaplacian}
   \alpha^2 \int_0^\infty\int_{\Rn}\jp{x}^{-2s}|H^{\frac{2\alpha-1}{4}}e^{itH^\alpha}\phi(x)|^2\,dx\,dt\leq C_0\|\phi\|_{L^2(\Rn)}^2,
    \ee
    for all $\phi\in L^2(\Rn).$
\subsection{The Stark Hamiltonian}
Consider the self-adjoint operator in $\Hcal=L^2(\Rn)$, $n\geq 1,$
$$H=-\Delta-x_1,$$
  where $x=(x_1,x_2,\ldots,x_n)\in \Rn.$

  This operator is the well-known ``Stark Hamiltonian'', describing the motion of a quantum-mechanical charged particle in a uniform electric field (all physical constants scaled to unity).

  It follows from ~\cite[Lemma 2.2]{ba10} that $H$ satisfies the condition of Assumption ~\ref{assumption} with
  $$J=\RR,\quad \MX=L^2_{\frac34}(\Rn). $$
  In fact, using the notation introduced at the end of Section ~\ref{secbasicsetup} it holds that
  $$ \|A(\lambda)\|_{\frac34,-\frac34}\leq C(1+|\lambda|)^{-\frac12},\quad \lambda\in\RR.$$
  It follows that ~\cite[Theorem A]{ba10}
  $$\int_{\RR}\int_{\Rn}\jp{x}^{-\frac32}|(I+|H|)^\frac14 e^{itH}\phi(x)|^2\, dxdt\leq C_0\|\phi\|_{L^2(\Rn)}^2. $$
   For the operator $|H|^\alpha,$ with $\alpha>0,$ Corollary ~\ref{THM:powers} now yields
   \be\label{eqstarkalpha}\alpha^2\int_{\RR}\int_{\Rn}\jp{x}^{-\frac32}||H|^{\alpha-1}(I+|H|)^\frac14 e^{it|H|^\alpha}\phi(x)|^2\, dxdt\leq C_0\|\phi\|_{L^2(\Rn)}^2.
      \ee
      Of course, such an estimate can be derived for any function $a(H)$ where $a(\lambda)$ satisfies the condition of Assumption ~\ref{assumptionasigma}.
\subsection{The Schr\"{o}dinger operator with potential}
Consider the operator $\widetilde H=H+V$ in $L^2(\Rn),\,n\geq 3,$  where $H=-\Delta.$  This operator can be studied in terms of Theorem ~\ref{THM:sp-comparison-gen-op-pert}.

Employing the notation of Subsection ~\ref{subsec-pert} we let $\MX=L^2_{s}(\Rn),$ so that $\MX^\ast=L^2_{-s}(\Rn)$ and $\MX^\ast_H=H^2_{-s}(\Rn),$
 the Sobolev space of functions whose derivatives up to second-order are in $L^2_{-s}(\Rn).$

 The condition of Assumption ~\ref{assumptionholder} is satisfied with $s>\frac12$ ~\cite[Section 2]{ba9}.

We assume that $V(x)$ is a  symmetric, short-range potential (see ~\eqref{eqassumeVSR}).

 The limiting values  of the resolvent of the free operator $H$ satisfy the classical Agmon property:
 \begin{claim}
   For any $s>\frac12,$ the limiting values $R^\pm(\lambda)=\lim\limits_{\eps\downarrow 0}(H-\lambda\mp i\eps)^{-1},\,\lambda>0$ satisfy
   \be\label{eqestRpmmss}\|R^\pm(\lambda)\|_{s,-s}\leq C\lambda^{-\frac12},\quad \lambda>0,\ee
   so that the spectral derivative $A(\lambda)=\frac{1}{2\pi i}(R^+(\lambda)-R^-(\lambda))$ also satisfies
   \be\label{eqestAlamss}\|A(\lambda)\|_{s,-s}\leq C\lambda^{-\frac12},\quad \lambda>0,\ee
   where $C>0$ depends only on $s,n.$
 \end{claim}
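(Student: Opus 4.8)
Since $A(\lambda)=\frac{1}{2\pi i}\bigl(R^+(\lambda)-R^-(\lambda)\bigr)$ in $B(L^2_s(\Rn),L^2_{-s}(\Rn))$ (Stone's formula together with the limiting absorption principle, as recorded in the statement), the bound \eqref{eqestAlamss} follows from \eqref{eqestRpmmss} by the triangle inequality. I would, however, establish the estimate for $A(\lambda)$ first, since it is the essential ``on--shell'' ingredient and admits a direct proof, and then recover \eqref{eqestRpmmss} from it by adding the principal--value part. Note that the mere existence of $R^\pm(\lambda)$ in $B(L^2_s,L^2_{-s})$ for $\lambda>0$, $s>\frac12$, and the H\"older continuity of $\lambda\mapsto A(\lambda)$ are already granted by Assumption \ref{assumptionholder} (the classical Agmon theory for $-\Delta$, cf.\ \cite{ba9}); only the \emph{quantitative} decay, uniform in $\lambda>0$, has to be produced.

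On the Fourier side $H=-\Delta$ is multiplication by $\abs{\xi}^2$ and $E(\lambda)$ is multiplication by $\mathbf{1}_{\{\abs{\xi}^2<\lambda\}}$, so Stone's formula and differentiation of the volume integral (the co--area factor being $\abs{\nabla\abs{\xi}^2}=2\sqrt\lambda$) give, for $f,g\in L^2_s(\Rn)$,
\[
\bigl(A(\lambda)f,g\bigr)_{L^2(\Rn)}=\frac{1}{2\sqrt\lambda}\int_{\{\abs{\xi}^2=\lambda\}}\widehat f(\xi)\,\overline{\widehat g(\xi)}\;dS(\xi),
\]
$dS$ being surface measure. I would then invoke \textbf{Agmon's trace lemma}: for $s>\frac12$ there is $C_s>0$ (depending also on $n$) with $\bigl\|\widehat f\bigr\|_{L^2(\{\abs{\xi}=r\},\,dS)}\le C_s\|f\|_{0,s}$ for \emph{every} $r>0$. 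Applying this with $r=\sqrt\lambda$ and the Cauchy--Schwarz inequality yields $\bigl|(A(\lambda)f,g)_{L^2}\bigr|\le\frac{C_s^2}{2}\lambda^{-1/2}\|f\|_{0,s}\|g\|_{0,s}$, which is exactly \eqref{eqestAlamss}.

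For \eqref{eqestRpmmss}, the Plemelj--Sokhotski formula gives $R^\pm(\lambda)=P(\lambda)\pm i\pi A(\lambda)$, where $P(\lambda)=\mathrm{p.v.}\!\int_0^\infty\frac{A(\mu)}{\mu-\lambda}\,d\mu$ is the Cauchy transform of the spectral density (the principal value being legitimate because of the H\"older continuity of $A$). One estimates $\|P(\lambda)\|_{s,-s}$ by splitting the integral according to whether $\abs{\mu-\lambda}<\lambda/2$ or $\abs{\mu-\lambda}\ge\lambda/2$: on the near region one subtracts $A(\lambda)$ (whose principal value contributes nothing) and bounds $\|A(\mu)-A(\lambda)\|_{s,-s}$ by the H\"older modulus of $A$, which by a scaling argument is at most $C\lambda^{-1/2}(\abs{\mu-\lambda}/\lambda)^{\gamma}$ for a fixed $\gamma>0$; on the far region one uses $\|A(\mu)\|_{s,-s}\le C\mu^{-1/2}$ from the previous step, so that after the substitution $\mu=\lambda\nu$ the contribution becomes $\lambda^{-1/2}$ times the finite constant $\int_{(0,1/2]\cup[3/2,\infty)}\frac{\nu^{-1/2}}{\abs{\nu-1}}\,d\nu$. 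This gives $\|P(\lambda)\|_{s,-s}\le C\lambda^{-1/2}$, and hence $\|R^\pm(\lambda)\|_{s,-s}\le\|P(\lambda)\|_{s,-s}+\pi\|A(\lambda)\|_{s,-s}\le C\lambda^{-1/2}$, which is \eqref{eqestRpmmss}. The only non-elementary point --- and the one I expect to be the main obstacle --- is the trace lemma with a constant \emph{uniform in the radius} $r$: this is precisely where the $\lambda^{-1/2}$ rate (in particular its sharpness at high energy) originates and where the oscillation $e^{\pm i\sqrt\lambda\abs{x-y}}$ of the resolvent kernel is genuinely exploited; crude size bounds on that kernel, or pointwise comparison of the dilation--rescaled weights $\jp{x/\sqrt\lambda}^{-s}$, only yield powers of $\lambda$ that degrade as $s$ grows and never attain $\lambda^{-1/2}$. (The argument uses only $s>\frac12$; the standing hypothesis $n\ge3$ is not actually needed for this particular estimate.)
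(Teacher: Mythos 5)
The paper gives no argument for this Claim at all: it is quoted as the classical Agmon limiting--absorption estimate for $H=-\Delta$ (cf.\ the references to \cite{ba9}), with \eqref{eqestAlamss} deduced from \eqref{eqestRpmmss} exactly as in your first sentence. Your first half --- computing $\jp{A(\lambda)f,g}=\frac{1}{2\sqrt\lambda}\int_{|\xi|^2=\lambda}\widehat f\,\overline{\widehat g}\,dS$ and invoking the trace lemma with a constant uniform in the radius --- is a correct and standard proof of \eqref{eqestAlamss}, and you rightly isolate the uniform-in-$r$ trace estimate as the one nontrivial ingredient (it does hold for $s>\frac12$, uniformly in $r>0$, in every dimension, e.g.\ by a Littlewood--Paley/Besov $B^{1/2}_{2,1}$ argument).

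The second half, however, has a genuine gap. The step that fails is the ``scaling argument'' giving $\|A(\mu)-A(\lambda)\|_{s,-s}\le C\lambda^{-1/2}\bigl(|\mu-\lambda|/\lambda\bigr)^{\gamma}$ on the near region $|\mu-\lambda|<\lambda/2$: this inequality is false. Take $n=3$, $r=\sqrt\lambda$ large, $\rho=r+1$, $\mu=\rho^2$ (so $|\mu-\lambda|\approx 2\sqrt\lambda$, well inside your near region), and $\widehat f(\xi)=\chi\bigl(4(|\xi|-\rho)\bigr)$ a unit-amplitude shell of width $\tfrac14$ around $|\xi|=\rho$. Then $\jp{A(\mu)f,f}\sim\rho$, $\jp{A(\lambda)f,f}=0$, while $\|f\|_{0,s}^2=\|\widehat f\|_{H^s}^2\sim\rho^2$, so $\|A(\mu)-A(\lambda)\|_{s,-s}\gtrsim\lambda^{-1/2}$, whereas your bound predicts $C\lambda^{-1/2-\gamma/2}$. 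The true H\"older modulus is governed by the distance $|\sqrt\mu-\sqrt\lambda|$ between the two spheres, not by $|\mu-\lambda|/\lambda$; with that corrected modulus your cancellation argument only controls $|\mu-\lambda|\lesssim\sqrt\lambda$, and on the remaining intermediate range $\sqrt\lambda\lesssim|\mu-\lambda|\le\lambda/2$ the size bound $\|A(\mu)\|_{s,-s}\lesssim\lambda^{-1/2}$ alone produces a $\lambda^{-1/2}\log\lambda$ loss. So the Plemelj--Sokhotski route needs an extra idea there (e.g.\ a dyadic decomposition in $\bigl||\xi|^2-\lambda\bigr|$ with trace estimates on each shell, exploiting almost-orthogonality of the shells, or interpolation with a bound on $\tfrac{d}{d\lambda}A(\lambda)$ in stronger weights), or one should simply prove \eqref{eqestRpmmss} directly on the Fourier side as in Agmon's original argument --- which is in effect what the paper does by citation. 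Your concluding remarks (only $s>\frac12$ is used; $n\ge3$ is not needed for this estimate) are correct for \eqref{eqestAlamss} and \eqref{eqestRpmmss} as stated.
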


 We  now obtain the following spacetime estimate for $\widetilde H.$
 \begin{prop} Let $s>\frac12$ and assume that the potential $V$ is short-range and symmetric, and
       the spectrum of $\widetilde H$ in $(0,\infty)$ is absolutely continuous.

       Assume further that
       \be\label{eqlimsupVR} \limsup\limits_{\lambda\to\infty}\|VR^\pm(\lambda)\|_{s,s}<1.
       \ee

        Fix $\delta>0$ and let $J_\delta=(\delta,\infty).$ Then there exists a constant $ C_{\delta,s}$ so that
         \be\label{eqSchrVx}
      \int_{\RR}\int_{\Rn}\jp{x}^{-2s}|(I+\widetilde H)^\frac14 {\widetilde  E}(J_\delta)e^{it{\widetilde H}}\phi(x)|^2\,dx\,dt\leq C_{\delta,s}\|\phi\|_{L^2(\Rn)}^2,
      \ee
      for all $\phi\in L^2(\Rn).$

      \end{prop}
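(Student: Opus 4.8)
The plan is to derive \eqref{eqSchrVx} from Theorem \ref{THM:sp-comparison-gen-op-pert}, comparing $\widetilde H=H+V$ with $H=-\Delta$ on the open set $J=J_\delta=(\delta,\infty)$, in the setup of Subsection \ref{subsec-pert}: $\Hcal=L^2(\Rn)$, $\MX=L^2_s(\Rn)$ with $s>\tfrac12$, $\MX^\ast=L^2_{-s}(\Rn)$, $\MX^\ast_H=H^2_{-s}(\Rn)$. The free operator $H$ is absolutely continuous on $(0,\infty)$, so $P_{ac}(H)E(J_\delta)=E(J_\delta)$, and $A(\lambda)$ is H\"older continuous on $(0,\infty)$ in $B(\MX,\MX^\ast)$, so Assumption \ref{assumptionholder} holds on $J_\delta$; the hypothesis that the spectrum of $\widetilde H$ on $(0,\infty)$ is absolutely continuous gives $\widetilde P_{ac}(\widetilde H)\widetilde E(J_\delta)=\widetilde E(J_\delta)$.

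The first and main step is to check, on $J_\delta$, the hypotheses of Proposition \ref{propAtildelambda}: invertibility of $I+VR^\pm(\lambda)$ in $\MX$ for each $\lambda\in J_\delta$, and the uniform bound \eqref{eqassumpJ2}. Invertibility for every $\lambda\in J_\delta\subseteq(0,\infty)$ follows from \cite[Theorem 4.13]{BA10}, because $\widetilde H$ has no eigenvalue in $(0,\infty)$ and $A(\lambda)$ has the required mild regularity. For the uniform bound I would split $J_\delta=(\delta,\Lambda]\cup[\Lambda,\infty)$. By \eqref{eqlimsupVR} there are $\Lambda>\delta$ and $\theta\in(0,1)$ with $\|VR^\pm(\lambda)\|_{s,s}\le\theta$ for all $\lambda\ge\Lambda$, so on $[\Lambda,\infty)$ a Neumann series gives $\|[I+VR^\pm(\lambda)]^{-1}\|_{B(\MX,\MX)}\le(1-\theta)^{-1}$. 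On the compact interval $[\delta,\Lambda]\subseteq(0,\infty)$ the family $\lambda\mapsto VR^\pm(\lambda)$ is norm-continuous in $B(\MX,\MX)$ — the limiting resolvents $R^\pm(\lambda)$ of $-\Delta$ are (H\"older) continuous in $\lambda$ with values in $B(\MX,\MX^\ast_H)$ by classical Agmon theory, and $V:\MX^\ast_H\to\MX$ is a fixed bounded (indeed compact) operator — so, inversion being continuous on the invertible operators, $\lambda\mapsto[I+VR^\pm(\lambda)]^{-1}$ is continuous and hence bounded on $[\delta,\Lambda]$. Combining the two ranges yields \eqref{eqassumpJ2} on $J_\delta$, and Proposition \ref{propAtildelambda} then provides $\widetilde A(\lambda)\in B(\MX,\MX^\ast)$ on $J_\delta$ with $\|\widetilde A(\lambda)\|_{B(\MX,\MX^\ast)}\le C\|A(\lambda)\|_{B(\MX,\MX^\ast)}$.

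It then remains to set up and feed Theorem \ref{THM:sp-comparison-gen-op-pert}. I would take $a(\lambda)=\widetilde a(\lambda)=\lambda$, $\sigma(\lambda)=\lambda^{1/4}$, and, with $\kappa_\delta=(1+\delta^{-1})^{1/2}$, $\widetilde\sigma(\lambda)=\kappa_\delta^{-1/2}(1+\lambda)^{1/4}$. These satisfy Assumption \ref{assumptionasigma} on $J_\delta$ (with $\Ncal=\emptyset$, since $\delta>0$), and because $(1+\lambda)/\lambda\le 1+\delta^{-1}=\kappa_\delta^2$ for $\lambda>\delta$ we obtain $|\widetilde\sigma(\lambda)|^2/|\widetilde a'(\lambda)|=\kappa_\delta^{-1}(1+\lambda)^{1/2}\le\lambda^{1/2}=|\sigma(\lambda)|^2/|a'(\lambda)|$ on $J_\delta$, which is \eqref{EQ:sc-cond1-op-pert}. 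The input hypothesis \eqref{EQ:cor-smest-1-op-pert} for these choices is the free smoothing estimate $\int_\RR\int_{\Rn}\jp{x}^{-2s}|H^{1/4}e^{itH}\psi|^2\,dx\,dt\le C_1\|\psi\|_{L^2}^2$ of \cite{ba5} (valid for $s>\tfrac12$), applied with $\psi=E(J_\delta)\phi$ and using $\|E(J_\delta)\phi\|_{L^2}\le\|\phi\|_{L^2}$ together with the commutation of $E(J_\delta)$ with $H^{1/4}e^{itH}$. Theorem \ref{THM:sp-comparison-gen-op-pert} then produces a constant $C>0$ with $\|\widetilde\sigma(\widetilde H)e^{it\widetilde H}\widetilde E(J_\delta)\phi\|_{L^2(\RR_t,L^2_{-s})}\le CC_1^{1/2}\|\phi\|_{L^2}$; recalling $\widetilde\sigma(\widetilde H)=\kappa_\delta^{-1/2}(1+\widetilde H)^{1/4}$, multiplying by $\kappa_\delta^{1/2}$, squaring the $L^2(\RR_t,L^2_{-s})$-norm, and using that $(1+\widetilde H)^{1/4}$, $e^{it\widetilde H}$ and $\widetilde E(J_\delta)$ mutually commute, one gets \eqref{eqSchrVx} with $C_{\delta,s}=\kappa_\delta C^2C_1$.

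The hard part is the second paragraph: promoting the two pointwise inputs — the bound \eqref{eqlimsupVR} at infinity and the absence of embedded eigenvalues of $\widetilde H$ on $(0,\infty)$ — to the single \emph{uniform} resolvent estimate \eqref{eqassumpJ2} over the whole half-line $J_\delta$. The remainder is essentially bookkeeping, together with direct appeals to Proposition \ref{propAtildelambda} and Theorem \ref{THM:sp-comparison-gen-op-pert}.
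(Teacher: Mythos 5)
Your proof is correct and follows essentially the same route as the paper: the central step is identical, namely verifying the uniform bound \eqref{eqassumpJ2} by splitting $J_\delta$ into a compact piece (invertibility of $I+VR^\pm(\lambda)$ from the absence of embedded eigenvalues plus norm-continuity) and the tail at infinity (Neumann series via \eqref{eqlimsupVR}), and then invoking Proposition \ref{propAtildelambda} and Theorem \ref{THM:sp-comparison-gen-op-pert}. The only (cosmetic) difference is the choice of free input: the paper uses the estimate with weight $(I+H)^{1/4}$ from \cite[Theorem A]{ba9}, so that $\sigma=\widetilde\sigma=(1+\lambda)^{1/4}$ and \eqref{EQ:sc-cond1-op-pert} holds with equality, whereas you use the $H^{1/4}$ estimate of \cite{ba5} and absorb the discrepancy on $(\delta,\infty)$ into the $\delta$-dependent factor $(1+\delta^{-1})^{1/2}$.
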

      \begin{proof}
        The estimate ~\eqref{eqSchrVx}, with the free Schr\"{o}dinger operator $H$ instead of $\widetilde H$ (and $E(\lambda)$ instead of ${\widetilde E}(\lambda)$) is well-known ~\cite[Theorem A]{ba9}.

        In order to invoke Theorem ~\ref{THM:sp-comparison-gen-op-pert} we need to verify the condition in Proposition ~\ref{propAtildelambda},
        namely the estimate ~\eqref{eqassumpJ2}.

        Since we assume that $\widetilde H$ has no eigenvalues in $(0,\infty)$ the operators $I+VR^\pm(\lambda)$ are invertible (in $\MX$) for any $\lambda>0,$ and being continuous (in the operator topology) we conclude that, for any $\Lambda>\delta$ we have
        \be\label{eqsupVRlambdaLambda}\sup\limits_{\lambda\in[\delta,\Lambda]}\|[I+VR^\pm(\lambda)]^{-1}\|_{s,-s}<\infty.\ee
        On the other hand, it follows from the assumption ~\eqref{eqlimsupVR} that if $\Lambda$ is sufficiently large there exists $\eta>0$ so that
            $\|VR^\pm(\lambda)\|_{s,s}<1-\eta,$ so that
            \be\label{eqsupVRlLambdainf}\sup\limits_{\lambda>\Lambda}\|[I+VR^\pm(\lambda)]^{-1}\|_{s,-s}<\frac{1}{\eta}.\ee
            Combining ~\eqref{eqsupVRlambdaLambda} and ~\eqref{eqsupVRlLambdainf} we get the needed estimate ~\eqref{eqassumpJ2}.
      \end{proof}
      \begin{example}\begin{itemize} \item Consider the case where $V=V(x)$ is a multiplication operator. In view of ~\eqref{eqestRpmmss} the condition ~\eqref{eqlimsupVR} is certainly satisfied under the pointwise decay condition

 \be\label{eqVshort}|V(x)|\leq C\jp{x}^{-1-\eps},\quad x\in \Rn,\,\,\eps>0.\ee
    \item More generally, let $H^\alpha_{-s}(\Rn)$ be the weighted Sobolev space of order $\alpha>0,$ normed by
    $$\|f\|_{\alpha,s}=\|(I-\Delta)^{\frac{\alpha}{2}}f\|_{0,s}.$$
    It was shown in  ~\cite{ba6} that the condition ~\eqref{eqlimsupVR} is satisfied if for some $\eps>0$ the operator
      \be\label{eqVH1-eps}  V:H^{1-\eps}_{-s}\hookrightarrow L^2_s,\quad s>\frac12,
      \ee
      is bounded.
      \item Observe that $V$ in ~\eqref{eqVH1-eps} need not be a multiplicative real potential. For example, we can take $V$ to be the pseudodifferential operator
      \be
      V=\jp{x}^{-s}(I-\Delta)^\beta \jp{x}^{-s},
      \ee
      for any $\beta<\frac12.$
      \end{itemize}
   \end{example}
      We refer the reader to ~\cite{JK79,J80,ky} for related spacetime estimates for the Schr\"{o}dinger operator, using very different methods.

%

%

\end{document}